\documentclass{imsart}

\RequirePackage[OT1]{fontenc}
\RequirePackage{amsthm,amsmath}
\RequirePackage[numbers]{natbib}
\RequirePackage[colorlinks,citecolor=blue,urlcolor=blue]{hyperref}
\usepackage{lineno,hyperref}
\usepackage{comment}
\usepackage{amsmath,amsxtra,amssymb,latexsym, amscd}
\usepackage[mathscr]{eucal}
\usepackage{amsthm}


\startlocaldefs
\numberwithin{equation}{section}
\theoremstyle{plain}
\newtheorem{theorem}{Theorem}[section]

\newtheorem{lemma}[theorem]{Lemma}
\newtheorem{proposition}{Proposition}

\theoremstyle{definition}
\newtheorem{definition}[theorem]{Definition}
\newtheorem{remark}{Remark}

\endlocaldefs

\allowdisplaybreaks

\begin{document}

\begin{frontmatter}
\title{Regularity of solutions of abstract linear evolution equations}
\runtitle{Linear evolution equations}
\thankstext{T1}{This work was supported by JSPS KAKENHI Grant Number 20140047.}

\begin{aug}
\author{Vi$\hat{\d{e}}$t T$\hat{\text{o}}$n T\d{a}\thanksref{T1} \ead[label=e1]{taviet.ton[at]ist.osaka-u.ac.jp}}

\address{Department of Information and Physical Sciences\\
  Graduate School of Information Science and Technology, Osaka University\\
  Suita, Osaka 565-0871, Japan\\
\printead{e1}}

\runauthor{V. T. T\d{a}}

\affiliation{Osaka University}

\end{aug}

\begin{abstract}
In this paper, we study regularity of solutions to linear evolution equations of the form $dX+AXdt=F(t)dt$ in a Banach space $H$, where $A$ is a sectorial operator  in $H$ and $A^{-\alpha} F \, (\alpha>0)$  belongs to a weighted H\"{o}lder continuous function space. Similar results are obtained for linear evolution equations with additive noise of the form $dX+AXdt=F(t)dt+G(t)dW(t)$ in a separable Hilbert space $H$, where $W(t)$ is a  cylindrical Wiener process. Our results are applied to  a model arising in neurophysiology, which has been proposed by Walsh \cite{Walsh}.
\end{abstract}

\begin{keyword}[class=MSC]
\kwd[Primary ]{60H15}
\kwd{35R60}
\kwd[; secondary ]{47D06}
\end{keyword}

\begin{keyword}
\kwd{Analytic semigroups}
\kwd{Stochastic linear evolution equations}
\kwd{Regularity}
\end{keyword}
\tableofcontents
\end{frontmatter}

\section{Introduction and main results}

In recent years, existence, uniqueness and regularity of solutions to stochastic partial differential equations have been extensively studied by many authors. These topics have been developed mainly by using three approaches, that is, the semigroup methods (see   Da Prato-Zabczyk \cite{prato},  T\d{a}-Yagi \cite{Ton2}, T\d{a} \cite{Ton3},  and references therein),  the variational methods (see Rozovskii \cite{Rozovskii}, Pr\'{e}v\^{o}t-R\"{o}ckner \cite{PrevotRockner}, and references therein), and the martingale measure methods (see Walsh \cite{Walsh}). 
Among others, stochastic linear evolution equations have been studied by many authors in Hilbert or $L_2$ spaces (see  Rozovskii \cite{Rozovskii}, 
Da Prato et al. \cite{prato0,prato}, T\d{a}  \cite{Ton1}), in weighted Sobolev $L_p$ spaces (see  Krylov-Lototsky \cite{Krylov1}),  in weighted H\"{o}lder spaces (see Mikulevicius \cite{Mikulevicius2}), and in M-type 2 Banach space (Brze\'{z}niak \cite{Brzezniak}, T\d{a}-Yagi \cite{Ton0}, T\d{a}-Yamamoto-Yagi \cite{Ton4}). 
 
In this paper, we shall study regularity of solutions to both deterministic and stochastic linear evolution equations whose coefficients belong to  weighted H\"{o}lder  continuous function spaces by using the semigroup methods. Our results  can be applied to a class of  stochastic partial differential equations such as the Zakai equation in the nonlinear filtering problem (see  Rozovskii \cite{Rozovskii}), the stochastic heat equation (see Hairer \cite{Hairer}), Nagumo's equation, Hodgkin-Huxley equations (see \cite{Walsh,prato}), which can be approximated by the equation:
$$\frac{\partial X}{\partial t}=\Delta X -aX + F(t) +G(t)\dot W.$$
As a consequence, some results obtained by Walsh  \cite{Walsh} for a model arising in neurophysiology are improved.

Let us  introduce the framework in the deterministic case.
 We consider the Cauchy problem for a linear evolution equation
\begin{equation} \label{P1}
\begin{cases}
dX+AXdt=F(t)dt,\hspace{1cm} 0<t\leq T,\\
X(0)=\xi
\end{cases}
\end{equation}
in   a Banach space $H$ with norm $\|\cdot\|,$
where $T>0$,  $A\colon\mathcal D(A)\subset H\to H$ is a densely defined, closed linear operator in $H$,
 and $F$ is an $H$-valued addition external force. 
\begin{definition}
Let $F$ satisfy the condition
$$\int_0^t S(t-s) F(s)ds<\infty,  \hspace{2cm} t\in [0,T],$$
where $S(t)$ is the $C_0$-semigroup  in $H$ generated by  the operator $(-A)$. 
Then the process $X$ defined by 
$$X(t)=S(t)\xi + \int_0^t S(t-s) F(s)ds, \hspace{2cm} t\in [0,T]$$
is called a mild solution of \eqref{P1}.
\end{definition}
(In Remark \ref{remark2} below, we will give a discussion about a relation between weak, mild and strong solutions.)

When the initial value $\xi\in H$ is arbitrary and  $F\in L^1([0,T];H)$, the Cauchy problem   \eqref{P1} possesses a unique continuous mild solution.  
Meanwhile, when $\xi\in \mathcal D(A)$ and $F\in W^{1,p}([0,T];H) $ with $p\geq 1$,  the mild solution possesses the regularity
$$X\in \mathcal C^1([0,T];H)\cap \mathcal C([0,T]; \mathcal D(A))\cap W^{1,p}([0,T];H),$$
where $W^{1,p}([0,T];H)$ denotes  the Sobolev space of all $H$-valued functions $u$ on $[0,T]$ whose weak derivative $u'$ belongs to $L^p([0,T];H)$ (see e.g., \cite{Ball,prato}).

When $(-A)$ is a sectorial operator (i.e. it satisfies conditions (H1) and (H2) below), it generates an analytical semigroup $S(t)$.  In  \cite{Pazy}, the author considered the case $\xi=0$ and $F\in \mathcal C^\alpha([0,T];H)$ for some $\alpha \in (0,1),$ where $\mathcal C^\alpha([0,T];H)$ denotes the space of  $\alpha$-H\"{o}lder continuous functions on $[0,T]$. Then the mild solution of \eqref{P1} satisfies
$$X\in \mathcal C^1([0,T];H)\cap \mathcal C([0,T];\mathcal D(A)).$$
 In  \cite{Sinestrari}, under the assumption  $F(0)\in \mathcal D_A(\alpha,\infty)$, it has been verified that 
$$X\in \mathcal C^{1,\alpha}([0,T];H)\cap \mathcal C^\alpha([0,T];\mathcal D(A)),$$
where $\mathcal C^{1,\alpha}$ is the space of $\mathcal C^1$ functions with derivatives in $\mathcal C^\alpha.$ 
If $F$ is taken from $ \mathcal C([0,T];\mathcal D_A(\alpha,\infty)) $ then (see \cite{prato-2})
$$X\in \mathcal C^1([0,T];\mathcal D_A(\alpha,\infty))\cap \mathcal C([0,T];\mathcal D_A(\alpha+1,\infty)),$$
where $ \mathcal D_A(\alpha,\infty)$ is the space of all $x\in H$ such that 
$\sup_{t>0} \frac{\|S(t)x-x\|}{t^\alpha}<\infty$,  and $\mathcal D_A(\alpha+1,\infty)=\{x\in \mathcal D(A)\colon Ax\in  \mathcal D_A(\alpha,\infty)\}$.
When $\xi\in H$ is arbitrary and $F$ belongs to a weighted H\"{o}lder continuous function space $\mathcal F^{\beta, \sigma}((0,T]; H)$ (see the definition below), Yagi \cite{yagi0} showed that
$$X\in \mathcal C^1((0,T];H)\cap \mathcal C([0,T];H) \cap \mathcal C((0,T];\mathcal D(A)).$$
He also obtained the maximal regularity for both initial value $\xi \in \mathcal D(A^\beta)$ and external force function $F\in \mathcal F^{\beta, \sigma}((0,T]; H)$  (see \cite{yagi1}-\cite{yagi}):
$$A^\beta X\in \mathcal  C([0,T];H),$$
$$\frac{dX}{dt}, AX \in \mathcal F^{\beta, \sigma}((0,T]; H).$$

In the present paper,  we assume that $A^{-\alpha}F$ belongs to the weighted H\"{o}lder continuous function space $\mathcal F^{\beta, \sigma}((0,T]; H)$ with some positive $\alpha$, i.e.  $F$ satisfies both temporal  and spatial regularity. We will show both temporal  and spatial regularity of solutions to  \eqref{P1}.  Similar results are obtained for linear evolution equations in Hilbert spaces with additive noise. 
Our result will be applied to a model arising in neurophysiology.

Let us review the function space  $\mathcal F^{\beta, \sigma}((0,T]; H)$ 
 for two exponents $0<\sigma<\beta\leq 1$ (see  \cite{yagi}).  The space  $\mathcal F^{\beta, \sigma}((0,T]; H)$ consists of all $H$-valued continuous functions $f(t)$ on $(0,T]$ (resp. $[0,T]$) when $0<\beta<1$ (resp. $\beta=1$)  with the following three properties:
\begin{itemize}
  \item [\rm (i)] When $\beta<1$, 
  \begin{equation}  \label{P2}
  t^{1-\beta} f(t)  \text{  has a limit as } t\to 0.
  \end{equation}
  \item [\rm (ii)] The function $f$ is H\"{o}lder continuous with the exponent $\sigma$ and with the weight $s^{1-\beta+\sigma}$, i.e.
  \begin{equation} \label{P3}
\begin{aligned}
&\sup_{0\leq s<t\leq T} \frac{s^{1-\beta+\sigma}\|f(t)-f(s)\|}{(t-s)^\sigma}\\
&=\sup_{0\leq t\leq T}\sup_{0\leq s<t}\frac{s^{1-\beta+\sigma}\|f(t)-f(s)\|}{(t-s)^\sigma}<\infty.
\end{aligned}
\end{equation}
  \item [\rm (iii)] 
   \begin{equation} \label{P4}
   \begin{aligned}
  \lim_{t\to 0} & w_f(t)=0, \\
& \text{ where } w_f(t)=\sup_{0\leq s  <t}\frac{s^{1-\beta+\sigma}\|f(t)-f(s)\|}{(t-s)^\sigma}.
\end{aligned}
  \end{equation}
  \end{itemize}  
Then  $\mathcal F^{\beta, \sigma}((0,T]; H)$ becomes a Banach space with  norm
$$\|f\|_{\mathcal F^{\beta, \sigma}((0,T]; H)}=\sup_{0\leq t\leq T} t^{1-\beta} \|f(t)\|+ \sup_{0\leq s<t\leq T} \frac{s^{1-\beta+\sigma}\|f(t)-f(s)\|}{(t-s)^\sigma}.$$
 For simplicity, if not specified the norm of $f$ in $\mathcal F^{\beta, \sigma}((0,T];H)$ will be denoted by $\|f\|_{\mathcal F^{\beta, \sigma}}$. 
The following useful inequality follows the definition directly. For every $ f\in \mathcal F^{\beta, \sigma}((0,T]; H), 0<s<t\leq T$ we have
\begin{equation} \label{P5}
\begin{cases}
\|f(t)\|\leq \|f\|_{\mathcal F^{\beta, \sigma}} t^{\beta-1}, \\
 \|f(t)-f(s)\| \leq w_f(t) (t-s)^{\sigma} s^{\beta-\sigma-1}\leq \|f\|_{\mathcal F^{\beta, \sigma}} (t-s)^{\sigma} s^{\beta-\sigma-1}.
\end{cases}
\end{equation}
In addition, it is not hard to show that
\begin{equation} \label{P6}
\mathcal F^{\gamma,\sigma} ((0,T];H)\subset \mathcal F^{\beta,\sigma} ((0,T];H), \hspace{2cm} 0<\sigma<\beta<\gamma\leq 1.
\end{equation}
\begin{remark}
The space $\mathcal F^{\beta, \sigma}((0,T]; H)$ is not  a trivial space.  When  $0<\sigma<\beta<1$,  $f(t) =t^{\beta-1} g(t) \in \mathcal F^{\beta, \sigma}((0,T]; H),$ where  $g(t)$ is any $H$-valued  function on $[0,T]$ such that
 $g\in \mathcal C^\sigma([0,T];H)$
  and $g(0)=0.$
When  $0<\sigma<\beta=1$,  the space $ \mathcal F^{1, \sigma}((0,T]; H)$ includes the space of  all H\"{o}lder continuous functions with  the exponent $\sigma.$ 
\end{remark}

Let us now formulate the precise conditions on the coefficients in \eqref{P1}. 
\begin{itemize}
  \item [(\rm{H1})] The spectrum $\sigma(A)$ of $A$ is  contained in an open sectorial domain $\Sigma_{\varpi}$: 
\begin{equation*} 
\sigma(A) \subset  \Sigma_{\varpi}=\{\lambda \in \mathbb C: |\arg \lambda|<\varpi\}, \quad \quad 0<\varpi<\frac{\pi}{2}.
       \end{equation*}
  \item [(\rm{H2})] The resolvent satisfies the estimate  
\begin{equation*} \label{H2}
          \|(\lambda-A)^{-1}\| \leq \frac{M_{\varpi}}{|\lambda|}, \quad\quad\quad \quad   \lambda \notin \Sigma_{\varpi}
     \end{equation*}
     with some  constant $M_{\varpi}>0$ depending only on the angle $\varpi$.
  \item [(\rm{H3})] 
\begin{equation*}  \label{H6}
\begin{aligned}
       & A^{-\alpha}F\in \mathcal F^{\beta, \sigma}((0,T];H)  \hspace{0.5cm}  \text{  with } 0<\sigma< \beta\leq 1 \text{  and  } \\
    & \frac{1+\sigma}{4}<\alpha\leq \frac{\beta}{2}.
\end{aligned}
              \end{equation*}
     \end{itemize} 

Under the assumptions (H1) and (H2), it is well-known that
\begin{proposition}[see e.g., \cite{yagi}]
Let {\rm (H1)} and {\rm (H2)}  be satisfied. Then
\begin{itemize}
\item  [\rm (i)] $(-A)$ generates a semigroup $S(t)=e^{-tA}$, i.e. $S(t-s)$ enjoys the property
\begin{equation*}
\begin{aligned}
\begin{cases}
S(t+s)=S(t)S(s), &\hspace{1cm} 0\leq  t, s<\infty.\\
S(0)=I. &
\end{cases}
\end{aligned}
\end{equation*}
\item  [\rm (ii)] For every $t>0$ and $\theta\geq 0$
\begin{equation} \label{P7}
\begin{aligned}
& \|A^\theta S(t)\| \leq \iota_\theta t^{-\theta}, \\
& \text{ where  } \iota_\theta:=\sup_{0\leq  t<\infty} t^\theta \|A^\theta S(t)\|<\infty.
\end{aligned}
\end{equation}
  In particular, 
\begin{equation} \label{P8}
\|S(t)\|\leq \iota_0, \hspace{2cm}  0\leq  t<\infty.
\end{equation}
\item  [\rm (iii)] For every $\theta>0$ there exists a constant $\upsilon_\theta$ such that
   \begin{equation}  \label{P9}
   \|A^{-\theta}\|\leq \upsilon_\theta.
   \end{equation}
\item[\rm (iv)] For every $0< \theta\leq 1 $
 \begin{equation} \label{P10}
 t^\theta A^{\theta} S(t)  \text{ converges to } 0 \text{ strongly on } H \text { as } t\to 0.
 \end{equation}
\end{itemize}
\end{proposition}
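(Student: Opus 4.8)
The plan is to construct the semigroup from the Dunford--Riesz contour integral and to read every assertion off this representation together with the resolvent bound (H2). First I record one geometric consequence of (H1): since $\sigma(A)$ is closed and is contained in the open sector $\Sigma_{\varpi}$, which does not contain the origin, we have $0\in\rho(A)$ and $d:=\mathrm{dist}(0,\sigma(A))>0$; moreover $\mathrm{Re}\,\lambda=|\lambda|\cos(\arg\lambda)\ge d\cos\varpi>0$ for every $\lambda\in\sigma(A)$, so the semigroup will decay exponentially at infinity. Fix $\varpi<\varpi'<\frac\pi2$ and let $\Gamma$ be the boundary of $\Sigma_{\varpi'}$, namely the two rays $\{re^{\pm i\varpi'}:r\ge0\}$ oriented counterclockwise around $\sigma(A)$, and set
\begin{equation*}
S(t)=\frac{1}{2\pi i}\int_{\Gamma}e^{-t\lambda}(\lambda-A)^{-1}\,d\lambda,\qquad t>0 .
\end{equation*}
The integral converges in operator norm because the resolvent is bounded near the origin (as $0\in\rho(A)$) while on the rays $|e^{-t\lambda}|=e^{-tr\cos\varpi'}$ with $\cos\varpi'>0$ decays against the bound $\|(\lambda-A)^{-1}\|\le M_{\varpi}/|\lambda|$ of (H2) for large $r$.

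For part (i), the semigroup law $S(t+s)=S(t)S(s)$ is obtained in the classical way: write $S(t)S(s)$ as a double integral over two contours $\Gamma,\Gamma'$ with $\Gamma'$ to the right of $\Gamma$, apply the resolvent identity $(\lambda-A)^{-1}(\mu-A)^{-1}=\bigl[(\lambda-A)^{-1}-(\mu-A)^{-1}\bigr]/(\mu-\lambda)$, and evaluate the two inner integrals by Cauchy's theorem, one of them vanishing and the other reproducing $e^{-(t+s)\lambda}$. One then sets $S(0)=I$ and verifies strong continuity at $0$, i.e.\ $S(t)x\to x$ as $t\to0^{+}$, first for $x\in\mathcal D(A)$ via the identity $S(t)x-x=-\int_0^tS(r)Ax\,dr$, and then for all $x\in H$ by density together with the uniform bound established in (ii).

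For part (ii) I insert the factor $\lambda^{\theta}$, which realizes the fractional power, and write
\begin{equation*}
A^{\theta}S(t)=\frac{1}{2\pi i}\int_{\Gamma}\lambda^{\theta}e^{-t\lambda}(\lambda-A)^{-1}\,d\lambda,\qquad \theta>0,\ t>0 .
\end{equation*}
For $\theta>0$ the extra factor $r^{\theta}$ makes the ray integral converge up to the origin on the crude bound alone, and a substitution gives
\begin{equation*}
\|A^{\theta}S(t)\|\le\frac{M_{\varpi}}{\pi}\int_{0}^{\infty}r^{\theta-1}e^{-tr\cos\varpi'}\,dr=\frac{M_{\varpi}\,\Gamma(\theta)}{\pi(\cos\varpi')^{\theta}}\,t^{-\theta},
\end{equation*}
so $t^{\theta}\|A^{\theta}S(t)\|$ is bounded uniformly in $t>0$; this is \eqref{P7} and shows $\iota_\theta<\infty$. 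The case $\theta=0$, namely the uniform bound \eqref{P8}, is the statement that $S(t)$ is a bounded analytic semigroup: boundedness on a neighbourhood of $t=0$ follows from the strong continuity above together with the uniform boundedness principle, and the exponential decay recorded in the first paragraph controls $t\to\infty$, so that $\iota_0=\sup_{t\ge0}\|S(t)\|<\infty$.

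Part (iii) then follows from $0\in\rho(A)$: for $\theta>0$ one has the absolutely convergent representation
\begin{equation*}
A^{-\theta}=\frac{1}{\Gamma(\theta)}\int_{0}^{\infty}t^{\theta-1}S(t)\,dt ,
\end{equation*}
whose integrand is dominated by $\iota_0\,t^{\theta-1}$ near $t=0$ (integrable since $\theta>0$) and by $C\,t^{\theta-1}e^{-\delta t}$ near $t=\infty$, yielding a finite bound $\upsilon_\theta$ and hence \eqref{P9}. Finally, for \eqref{P10} in part (iv) I argue by density: on the dense subspace $\mathcal D(A^{\theta})$ commutativity gives $t^{\theta}A^{\theta}S(t)x=t^{\theta}S(t)A^{\theta}x$, which tends to $0$ as $t\to0$ because $\|S(t)\|\le\iota_0$, and for arbitrary $x\in H$ one combines this with the uniform bound $\|t^{\theta}A^{\theta}S(t)\|\le\iota_\theta$ from (ii) in a standard $\varepsilon/3$ argument. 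The only genuinely delicate points are organizing the double-integral manipulation in (i) so that Fubini and Cauchy's theorem apply rigorously, and justifying the uniform bound \eqref{P8} up to $t=0$; the estimates for $\theta>0$ and the limiting arguments in (iii)--(iv) are then routine.
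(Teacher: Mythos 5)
Your overall strategy (Dunford--Riesz integral on a sector boundary, plus the standard fractional-power and density arguments) is the classical one --- the paper itself offers no proof, only the citation to Yagi's book, and your parts (i), (ii) for $\theta>0$, (iii) and (iv) are essentially the textbook arguments. However, there is a genuine gap at exactly the point you flag as delicate: the uniform bound \eqref{P8}. Your argument for it is circular. In part (i) you prove strong continuity $S(t)x\to x$ first only for $x\in\mathcal D(A)$, and you extend it to all $x\in H$ ``by density together with the uniform bound established in (ii)''; but in (ii) the case $\theta=0$ is obtained ``from the strong continuity above together with the uniform boundedness principle.'' The uniform boundedness principle needs $\sup_{0<t\le 1}\|S(t)x\|<\infty$ for \emph{every} $x\in H$, not merely for $x$ in the dense subspace $\mathcal D(A)$: pointwise boundedness on a dense set does not imply a uniform bound (take $T_n=nP_n$ with $P_n$ the orthogonal projection onto the $n$-th basis vector; orbits of finitely supported vectors are bounded, yet $\|T_n\|=n$). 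So at the moment you invoke UBP you have orbit bounds only on $\mathcal D(A)$, and the extension to all of $H$ is precisely what \eqref{P8} was supposed to provide. Moreover, \eqref{P8} cannot be read off your fixed contour $\Gamma=\partial\Sigma_{\varpi'}$ through the origin at all: the crude bound there gives only
\begin{equation*}
\|S(t)\|\le C_0+\frac{M_\varpi}{\pi}\int_{\delta}^{\infty}e^{-tr\cos\varpi'}\,\frac{dr}{r}\le C\bigl(1+\log(1/t)\bigr),
\end{equation*}
which diverges as $t\to0$, so no amount of care with that representation closes the gap.

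The standard repair is to prove \eqref{P8} directly with a $t$-dependent contour that exploits the scale invariance of (H2): take $\Gamma_t$ to consist of the two rays $\{re^{\pm i\varpi'}:r\ge 1/t\}$ joined by the arc $\{t^{-1}e^{i\phi}:\varpi'\le|\phi|\le\pi\}$, i.e.\ the arc circles the origin the \emph{long} way, through the left half-plane, so that the whole contour stays outside $\Sigma_{\varpi}$ where (H2) is valid (the spectrum, lying in $\Sigma_\varpi$, remains on the correct side of $\Gamma_t$). On the rays the substitution $u=tr$ gives $\frac{M_\varpi}{\pi}\int_{1}^{\infty}e^{-u\cos\varpi'}\frac{du}{u}$, and on the arc one has $|e^{-t\lambda}|\le e$, $\|(\lambda-A)^{-1}\|\le M_\varpi t$ and length $\le 2\pi/t$, so all contributions are bounded independently of $t$; this yields $\|S(t)\|\le\iota_0$ for all $t>0$, after which your density argument for strong continuity, your UBP-free extension, and parts (iii)--(iv) go through as written (note that (iii) and (iv) as you stated them also quote $\iota_0$, so they inherit the dependence on this fix). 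Once \eqref{P8} is established this way, the rest of your proposal is sound and matches the classical treatment in Yagi's and Pazy's books.
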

From now when we use notations $\iota_\theta$ or $\upsilon_\theta$, it refers to constants  in \eqref{P7} or \eqref{P9}, respectively. 
 Now we can state the main result for \eqref{P1}.
\begin{theorem}\label{theorem1}  
Let {\rm (H1)}, {\rm (H2)} and {\rm (H3)} be satisfied. Let  $\xi$  take  values in $\mathcal D(A^\beta).$  Then there exists a unique  mild solution of \eqref{P1} possessing the regularity:
$$X\in \mathcal C((0,T];\mathcal D(A^{1-\alpha})), $$
$$ A^{\alpha} X\in \mathcal C([0,T];H)\cap \mathcal F^{\beta-\sigma+\gamma,\gamma}((0,T];H)  \hspace{1cm} \text{ for every }\gamma\in [0, \sigma].$$
Furthermore, $X$ satisfies the estimate
\begin{equation} \label{P11}
\|X(t)\|\leq \iota_\alpha B(\beta,1-\alpha) \|A^{-\alpha}F\|_{\mathcal F^{\beta,\sigma}} t^{\beta-\alpha} +\iota_0 \|\xi\|, \hspace{1cm} t\in [0,T],
\end{equation}
where $B(\cdot,\cdot)$ is the beta function.
\end{theorem}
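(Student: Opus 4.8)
The plan is to write the mild solution as $X(t)=S(t)\xi+X_2(t)$ with $X_2(t)=\int_0^t S(t-r)F(r)\,dr$, and to work throughout with $g:=A^{-\alpha}F\in\mathcal F^{\beta,\sigma}((0,T];H)$, using $F(r)=A^\alpha g(r)$ so that $X_2(t)=\int_0^t A^\alpha S(t-r)g(r)\,dr$. Uniqueness is immediate, since by definition a mild solution is given by this very formula. I would prove \eqref{P11} first, as it is the cheapest: \eqref{P8} bounds the homogeneous term by $\iota_0\|\xi\|$, while \eqref{P7} with $\theta=\alpha$ and the first line of \eqref{P5} give $\|X_2(t)\|\le\iota_\alpha\|g\|_{\mathcal F^{\beta,\sigma}}\int_0^t(t-r)^{-\alpha}r^{\beta-1}\,dr$; the substitution $r=tu$ turns the integral into $B(\beta,1-\alpha)\,t^{\beta-\alpha}$, finite because $\alpha\le\beta/2<1$. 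This is exactly \eqref{P11}.

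Next I would settle the two mapping properties. For $X(t)\in\mathcal D(A^{1-\alpha})$ I apply $A^{1-\alpha}$ termwise: the homogeneous part $A^{1-\alpha}S(t)\xi=A^{1-\alpha-\beta}S(t)A^\beta\xi$ is continuous on $(0,T]$ by \eqref{P7} or \eqref{P9} according to the sign of $1-\alpha-\beta$; for the convolution I use the subtraction device $A^{1-\alpha}X_2(t)=\int_0^t AS(t-r)[g(r)-g(t)]\,dr+(I-S(t))g(t)$, where $\int_0^\tau AS(u)\,du=I-S(\tau)$, and the remaining integral converges by the second line of \eqref{P5} together with $\int_0^t(t-r)^{\sigma-1}r^{\beta-\sigma-1}\,dr=B(\sigma,\beta-\sigma)\,t^{\beta-1}$, finite since $0<\sigma<\beta$. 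For $A^\alpha X\in\mathcal C([0,T];H)$ the homogeneous part is $A^{\alpha-\beta}S(t)A^\beta\xi$, continuous up to $t=0$ because $\alpha<\beta$ makes $A^{\alpha-\beta}$ bounded; the convolution part equals $\int_0^t A^{2\alpha}S(t-r)g(r)\,dr$, and the same subtraction trick (now with \eqref{P7} at $\theta=2\alpha$, using $2\alpha\le\beta\le1$) shows it is continuous and tends to $0$ as $t\to0$. The weighted sup-norm requirement and property (i) for membership in $\mathcal F^{\beta-\sigma+\gamma,\gamma}$ then follow from the bound $\|A^\alpha X(t)\|\le C(1+t^{\beta-2\alpha})$, whose exponent $\beta-2\alpha\ge0$ is guaranteed by $\alpha\le\beta/2$.

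The heart of the matter is the weighted Hölder bound. Observe first that the weight exponent $1-(\beta-\sigma+\gamma)+\gamma=1-\beta+\sigma$ is \emph{independent} of $\gamma$, so it suffices to prove $s^{1-\beta+\sigma}\|A^\alpha X(t)-A^\alpha X(s)\|\le C(t-s)^\gamma$ for $0\le s<t\le T$ and $\gamma\in[0,\sigma]$. The homogeneous difference I control by writing it as $(S(t-s)-I)A^{\alpha+\gamma}S(s)\xi$ and invoking the smoothing estimate $\|(S(\tau)-I)A^{-\gamma}\|\le C\tau^\gamma$ (a consequence of \eqref{P7}) together with $A^{\alpha+\gamma}S(s)\xi=A^{\alpha+\gamma-\beta}S(s)A^\beta\xi$. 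For the convolution, setting $P(t)=\int_0^t A^{2\alpha}S(t-r)[g(r)-g(t)]\,dr$ and using $\int_0^\tau A^{2\alpha}S(u)\,du=A^{2\alpha-1}(I-S(\tau))$ (with $A^{2\alpha-1}$ bounded since $2\alpha\le\beta\le1$), the algebra splits the difference as
\begin{align*}
A^\alpha X_2(t)-A^\alpha X_2(s)
&=\int_s^t A^{2\alpha}S(t-r)\,[g(r)-g(t)]\,dr\\
&\quad+\int_0^s\bigl[A^{2\alpha}S(t-r)-A^{2\alpha}S(s-r)\bigr][g(r)-g(s)]\,dr\\
&\quad+\Bigl(\int_0^s A^{2\alpha}S(t-r)\,dr\Bigr)[g(s)-g(t)]\\
&\quad+\bigl[A^{2\alpha-1}(I-S(t))g(t)-A^{2\alpha-1}(I-S(s))g(s)\bigr].
\end{align*}
The crucial point in arranging this is to subtract $g(r)-g(s)$ (not $g(r)-g(t)$) in the history integral, so that the factor $(s-r)^\sigma$ from \eqref{P5} tames the singularity of the second semigroup at $r=s$. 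The first, third and fourth lines are routine: each is bounded by $C(t-s)^\gamma s^{\beta-\sigma-1}$ using \eqref{P5}, \eqref{P7}, \eqref{P9} and the smoothing estimate, the third line via $\int_0^s A^{2\alpha}S(t-r)\,dr=A^{2\alpha-1}(S(t-s)-S(t))$.

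The main obstacle is the second line, the history integral $K_2$. Here I would split $\int_0^s\,dr$ at the diagonal scale $r=s-(t-s)$: on the far region $s-r\ge t-s$ I use the mean-value bound $\|A^{2\alpha}S(t-r)-A^{2\alpha}S(s-r)\|\le\iota_{2\alpha+1}(t-s)(s-r)^{-(2\alpha+1)}$, and on the near region $s-r\le t-s$ the crude bound $\iota_{2\alpha}[(t-r)^{-2\alpha}+(s-r)^{-2\alpha}]$; in both the extra $(s-r)^\sigma$ makes the $r$-integral converge and, after balancing against $(t-s)$, produces precisely the power $(t-s)^\gamma$ with weight $s^{\beta-\sigma-1}$. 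Tracking the exponents shows that every resulting Beta-type integral converges exactly under $0<2\alpha\le\beta\le1$ and $2\alpha<1+\sigma$, all of which \eqref{H6} supplies. Finally, property (iii) (the vanishing of $w_{A^\alpha X}(t)$ as $t\to0$) follows because each of the above bounds carries a genuinely positive power of $t$ or of $(t-s)$, combined with the corresponding property (iii) of $g$ in $\mathcal F^{\beta,\sigma}$. I expect the diagonal split and the verification that the balancing produces the uniform exponent $\gamma$ for all $\gamma\in[0,\sigma]$ to be the only delicate bookkeeping in the argument.
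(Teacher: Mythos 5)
Your overall architecture coincides with the paper's: the same subtraction device for $A^{1-\alpha}X_2$ and $A^{\alpha}X_2$, and --- after the identification $[S(t-s)-I]A^{2\alpha}S(s-u)=A^{2\alpha}S(t-u)-A^{2\alpha}S(s-u)$ --- your four-term splitting of $A^{\alpha}X_2(t)-A^{\alpha}X_2(s)$ is exactly the paper's $J_{21}+J_{22}+J_{23}+\bigl(J_3(t)-J_3(s)\bigr)$, and your treatment of existence, uniqueness, \eqref{P11}, the $\mathcal D(A^{1-\alpha})$-regularity and the continuity of $A^{\alpha}X$ is the paper's as well. The genuine divergence is in the history integral $K_2=J_{22}$. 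The paper writes $S(t-s)-I=\int_0^{t-s}AS(r)\,dr$, integrates in $r$ exactly, uses $(t-u)^{2\alpha}-(s-u)^{2\alpha}\le(t-s)^{2\alpha}$, splits at $u=s/2$, and is left with the integral $\int_0^\infty(1+r)^{-2\alpha}r^{\sigma-2\alpha}\,dr$ of \eqref{E25}, whose convergence at infinity is the one and only place in the whole proof where the lower bound $\alpha>\frac{1+\sigma}{4}$ of (H3) is used. Your diagonal split at $s-r=t-s$, with the mean-value bound $\|A^{2\alpha}S(t-r)-A^{2\alpha}S(s-r)\|\le\iota_{2\alpha+1}(t-s)(s-r)^{-2\alpha-1}$ on the far region and the crude bound on the near region, does close (the bookkeeping checks out, treating separately the regimes $t-s\le s/2$ and $t-s>s/2$) using only $0<\sigma<\beta$, $2\alpha\le\beta\le1$, $2\alpha<1+\sigma$ and $\gamma\le\sigma$. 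So your route dispenses with $\alpha>\frac{1+\sigma}{4}$ entirely; this is a small but real gain, showing that the lower bound in (H3) is an artifact of the paper's way of estimating $J_{22}$ rather than intrinsic to the theorem.

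One point needs repair. Writing $g=A^{-\alpha}F$, your fourth term must be split further, as in \eqref{E29}, into $A^{2\alpha-1}[I-S(t)][g(t)-g(s)]$ plus $A^{2\alpha-1}[I-S(t-s)]S(s)g(s)$. For the second piece, your justification of condition (iii) of the target space (``each bound carries a genuinely positive power of $t$ or of $t-s$'') fails in the admissible corner case $\beta=1$, $\alpha=\tfrac12$, $\gamma=\sigma$: the natural bound there is $C(t-s)^{\sigma}\|A^{\sigma}S(s)g(s)\|$, and after dividing by $(t-s)^{\gamma}=(t-s)^{\sigma}$ and inserting the weight $s^{1-\beta+\sigma}=s^{\sigma}$, no positive power of $t$ or $t-s$ survives --- only the bounded quantity $\|s^{\sigma}A^{\sigma}S(s)g(s)\|$, which yields (ii) but not the vanishing required in (iii). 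The paper's fix, which you need verbatim, is to use that $s^{1-\beta}g(s)$ has a limit as $s\to0$ and invoke the strong convergence \eqref{P10}, giving $\|s^{\sigma}A^{\sigma}S(s)\,s^{1-\beta}g(s)\|\to0$ as $s\to0$; everywhere else in your argument the positive-power reasoning (together with $w_g(t)\to0$) does suffice.
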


Let us next formulate the analogous result for linear evolution equations with additive  noise.
We proceed to study the Cauchy problem for an abstract stochastic evolution  equation
\begin{equation} \label{P12}
\begin{cases}
dX+AXdt=F(t)dt+ G(t)dW(t),\hspace{1cm} 0<t\leq T,\\
X(0)=\xi
\end{cases}
\end{equation}
in a separable Hilbert space $H$. Here,   $W(t)$  is a cylindrical Wiener process  on  a separable Hilbert space $U$, which is defined on a filtered probability space $(\Omega, \mathcal F,\mathcal F_t,\mathbb P).$ The coefficient $G$ is a measurable process  from $([0,T],\mathcal B([0,T]))$ to $(L_2(U;H),\mathcal B(L_2(U;H))),$  where $L_2(U;H)$ denotes the space of all Hilbert-Schmidt operators from $U$ to $H$. The linear operator $A$ and the function $F$ are the same to ones in \eqref{P1}.  And the initial value $ \xi$ is an $\mathcal  F_0$-measurable  random variable.
\begin{definition}\label{Def1}
Let $F$ and $G$ satisfies the conditions
$$\int_0^T \|S(t-s)F(s)\|ds<\infty$$
and 
$$    \int_0^T \|S(t-s)G(s)\|_{L_2(U;H)}^2ds<\infty.$$ 
Then an $H$-valued predictable  process $X(t), t\in [0,T]$ defined by 
\begin{align*}
X(t)=&S(t)\xi +\int_0^tS(t-s) F(s)ds+ \int_0^t S(t-s) G(s)dW(s) 
\end{align*}
 is called a  mild solution of \eqref{P12}.
\end{definition}  
\begin{remark}  \label{remark2}
In the semigroup approach to parabolic evolution equations,  mild (and \textit{weak}) solutions are mainly studied, because the existence of \textit{strong} solutions  (or \textit{real} solutions) is very rare \cite{prato}. Let us review the notions  of strong and weak solutions. Suppose that $F$ and $G$ satisfy the conditions
$$\int_0^T \|F(s)\|ds<\infty$$
and 
$$    \int_0^T \|G(s)\|_{L_2(U;H)}^2ds<\infty.$$ 
Then an $H$-valued predictable  process $X$ on $ [0,T]$ is called a strong solution to \eqref{P12} if $X$ takes values in $\mathcal D(A)$ a.s. and satisfies 
$$\int_0^T \|AX(s)\|ds<\infty, \hspace{2cm} \text{a.s.}$$
and almost surely 
$$X(t)=\xi+\int_0^t [F(s)-AX(s)]ds+ \int_0^t G(s)dW(s), \hspace{2cm} t\in [0,T].  $$
 The process $X$ is called a weak solution if for every $h\in \mathcal D(A^*)$ and $t\in [0,T]$ we have
\begin{align*}
\langle X(t),h\rangle=&\langle \xi,h\rangle+ \int_0^t [\langle F(s),h\rangle-\langle X(s),A^*h\rangle]ds\\
&+\int_0^t\langle G(s),h\rangle dW(s).
\end{align*}
It is clear that a strong solution is also a weak solution. In addition, a weak solution is a mild solution \cite{prato}. The inverses  are not true in general. They can take place, however, under some special conditions. In \cite{prato}, the authors showed that  a mild solution becomes a weak solution  under some conditions on $G$. In \cite{Ton}, under the assumption 
$$\|AS(t)\| \leq C t^{-\delta}, \hspace{2cm} t>0$$ 
with some constants $C>0$ and $\delta\in (0,\frac{1}{2})$, we proved that a mild solution is also a strong solution. 
\end{remark}
We suppose that the process $G$ in the equation \eqref{P12} belongs to a weighted H\"{o}lder continuous function space:
  \begin{equation*}
\rm{(H4)}  \hspace{2cm}  G\in \mathcal F^{\beta, \sigma} ((0,T];L_2(U;H)) \hspace{1cm}  \text{with }  0<\sigma<\beta-\frac{1}{2}.
  \end{equation*}
Our results for the stochastic case are as follows. In the case there is no external force, we have
\begin{theorem} \label{theorem2}
Assume that $F\equiv 0$ on $[0,T]$.  Let {\rm (H1)}, {\rm (H2)} and {\rm (H4)} be satisfied. Let  $\xi$  take values in $\mathcal D(A^\beta) $ a.s. such that $ \mathbb E\|A^\beta \xi\| <\infty$.  Then there exists a unique mild solution of  \eqref{P12} possessing the regularity:
$$X\in \mathcal C((0,T];\mathcal D(A^\nu)), \quad A^{\alpha_1} X\in  \mathcal C^\gamma([\epsilon,T];H) \hspace{1cm}\text{ a.s.},$$
and 
$$\mathbb E \|A^{\alpha_1} X\|\in  \mathcal F^{\beta,\sigma}((0,T];\mathbb R) $$
  for every $0<\nu<\frac{1}{2}, 0<\alpha_1\leq \frac{1}{2}-\sigma,  0<\gamma<\sigma$ and $\epsilon\in (0,T]$. Furthermore, $X$ satisfies the estimate
  \begin{equation} \label{P13}
  \mathbb E\|X(t)\|\leq C[\mathbb E\|A^\beta\xi\|+\|G\|_{\mathcal F^{\beta, \sigma} ((0,T];L_2(U;H))} t^{\beta-\frac{1}{2}}], \hspace{1cm} t\in [0,T],
  \end{equation}
  where $C$ is some constant depending only on the exponents and  constants $\iota_\theta,$ $ \upsilon_\theta \,(\theta\geq 0)$.
\end{theorem}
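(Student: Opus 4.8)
The plan is to work throughout with the explicit representation $X(t)=S(t)\xi+W_A(t)$, where $W_A(t)=\int_0^t S(t-s)G(s)\,dW(s)$ is the stochastic convolution (the deterministic convolution drops out since $F\equiv 0$), and to deduce every assertion from moment estimates of $W_A$ and its increments. The structural fact I would exploit is that $G$ is a \emph{deterministic} $L_2(U;H)$-valued function, so each stochastic integral below is a Wiener integral of a deterministic operator kernel; hence $W_A(t)$, its spatial lifts $A^\theta W_A(t)$, and all increments are centred Gaussian variables in $H$, for which $\mathbb{E}\|Z\|^p\le C_p(\mathbb{E}\|Z\|^2)^{p/2}$. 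Thus all higher moments are controlled by the second moment, which is computed through the It\^o isometry $\mathbb{E}\|\int_0^t\Phi(s)\,dW(s)\|^2=\int_0^t\|\Phi(s)\|_{L_2(U;H)}^2\,ds$. For well-posedness and \eqref{P13}, combining this isometry with $\|S(t-s)\|\le\iota_0$ from \eqref{P8} and the growth bound $\|G(s)\|_{L_2}\le\|G\|_{\mathcal F^{\beta,\sigma}}s^{\beta-1}$ from \eqref{P5} gives $\mathbb{E}\|W_A(t)\|^2\le\iota_0^2\|G\|_{\mathcal F^{\beta,\sigma}}^2\int_0^t s^{2\beta-2}\,ds$, which is finite exactly because $\beta>\tfrac12$ (forced by (H4)); this makes $W_A$ well defined, so a mild solution exists, and uniqueness is immediate since any mild solution equals the same convolution a.s. Taking square roots yields $\mathbb{E}\|W_A(t)\|\le C\|G\|_{\mathcal F^{\beta,\sigma}}t^{\beta-1/2}$, while $\mathbb{E}\|S(t)\xi\|=\mathbb{E}\|A^{-\beta}S(t)A^\beta\xi\|\le\upsilon_\beta\iota_0\,\mathbb{E}\|A^\beta\xi\|$, and adding the two gives \eqref{P13}.

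For the pathwise regularity I would estimate increments directly. First, $A^\nu W_A(t)$ is well defined for every $\nu<\tfrac12$ because $\mathbb{E}\|A^\nu W_A(t)\|^2\le\iota_\nu^2\|G\|^2\int_0^t(t-s)^{-2\nu}s^{2\beta-2}\,ds$ is a convergent Beta integral. To get continuity and the H\"older statement on $[\epsilon,T]$, I would split $A^{\alpha_1}(W_A(t)-W_A(s))=\int_0^s A^{\alpha_1}[S(t-r)-S(s-r)]G(r)\,dW(r)+\int_s^t A^{\alpha_1}S(t-r)G(r)\,dW(r)$ and apply the isometry: the ``new'' part is controlled by $\int_s^t(t-r)^{-2\alpha_1}\|G(r)\|_{L_2}^2\,dr\lesssim s^{2\beta-2}(t-s)^{1-2\alpha_1}$ via \eqref{P7}, and the ``overlap'' part by the semigroup difference estimate $\|A^{\alpha_1}[S(t-r)-S(s-r)]\|=\|A^{\alpha_1+\delta}S(s-r)\,A^{-\delta}(S(t-s)-I)\|\lesssim (s-r)^{-\alpha_1-\delta}(t-s)^\delta$ for a suitable $\delta\in(0,\tfrac12-\alpha_1]$. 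Restricting to $s\ge\epsilon$ makes the weight $s^{2\beta-2}$ and the resulting powers of $s$ harmless, so $\mathbb{E}\|A^{\alpha_1}(W_A(t)-W_A(s))\|^2\le C_\epsilon(t-s)^{2\delta}$; the Gaussian moment comparison then gives the $p$-th moment bound $C_{\epsilon,p}(t-s)^{p\delta}$, and Kolmogorov's continuity theorem produces, for $p$ large and $\delta$ close to $\tfrac12-\alpha_1\ge\sigma$, an a.s. version of $A^{\alpha_1}W_A$ that is $\gamma$-H\"older on $[\epsilon,T]$ for every $\gamma<\sigma$. The same computation with $\nu$ in place of $\alpha_1$ and a tiny $\delta$ yields continuity of $A^\nu W_A$ on each $[\epsilon,T]$, hence on $(0,T]$. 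Since $A^{\alpha_1}S(t)\xi=S(t)A^{\alpha_1}\xi$ is smooth on $(0,T]$ and $A^\nu S(t)\xi=S(t)A^\nu\xi$ is continuous up to $t=0$ (both using $\alpha_1,\nu<\beta$ and $\xi\in\mathcal D(A^\beta)$), the two regularity claims for $X$ follow.

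Finally, for $\mathbb{E}\|A^{\alpha_1}X\|\in\mathcal F^{\beta,\sigma}((0,T];\mathbb R)$ I would verify the three defining properties \eqref{P2}--\eqref{P4}. The growth bound follows from $\mathbb{E}\|A^{\alpha_1}W_A(t)\|\le(\mathbb{E}\|A^{\alpha_1}W_A(t)\|^2)^{1/2}\le C\|G\|t^{\beta-\alpha_1-1/2}\le C\|G\|T^{1/2-\alpha_1}t^{\beta-1}$, which together with the bounded $\xi$-term also gives $t^{1-\beta}\mathbb{E}\|A^{\alpha_1}X(t)\|\to0$, settling \eqref{P2}. For the weighted $\sigma$-H\"older seminorm I would reuse the increment splitting, now retaining all powers of $s$, to bound $|\mathbb{E}\|A^{\alpha_1}X(t)\|-\mathbb{E}\|A^{\alpha_1}X(s)\||\le\mathbb{E}\|A^{\alpha_1}(X(t)-X(s))\|$ and show the quotient $s^{1-\beta+\sigma}(t-s)^{-\sigma}(\cdots)^{1/2}$ stays bounded uniformly in $0<s<t\le T$, with property \eqref{P4} following from the explicit positive powers of $s$ that appear.

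The hard part will be this last step on the whole of $(0,T]$. The overlap integral $\int_0^s(s-r)^{-2\alpha_1-2\delta}r^{2\beta-2}\,dr$ is a convergent Beta integral only when $\alpha_1+\delta<\tfrac12$, so at the exponent $\delta=\sigma$ needed to produce the factor $(t-s)^\sigma$ the endpoint $\alpha_1=\tfrac12-\sigma$ is critical and the naive bound diverges at the diagonal $r=s$. Closing this case requires splitting the overlap integral into a near-diagonal piece $r\in[2s-t,s]$, handled by the non-cancelled bound $\|A^{\alpha_1}[S(t-r)-S(s-r)]\|\lesssim(s-r)^{-\alpha_1}$ (integrable since $\alpha_1<\tfrac12$), and a far piece handled by the difference bound with its $(t-s)^\delta$ gain, and then checking that the two contributions recombine to the exact weight $s^{1-\beta+\sigma}(t-s)^\sigma$ demanded by the $\mathcal F^{\beta,\sigma}$ seminorm. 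Balancing these powers uniformly in $s\in(0,T]$ and across the entire range $0<\alpha_1\le\tfrac12-\sigma$, rather than merely on $[\epsilon,T]$, is the main technical obstacle.
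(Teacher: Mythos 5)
Your overall architecture coincides with the paper's: represent $X(t)=S(t)\xi+W_G(t)$, use the It\^o isometry and the growth bound $\|G(s)\|_{L_2(U;H)}\le\|G\|_{\mathcal F^{\beta,\sigma}}s^{\beta-1}$ for well-posedness and \eqref{P13}, prove a weighted second-moment bound for the increments of $A^{\alpha_1}W_G$, then use Gaussianity plus Kolmogorov's continuity theorem and verify \eqref{P2}--\eqref{P4} for $\mathbb E\|A^{\alpha_1}X\|$; your first two stages and the Kolmogorov stage match the paper's Steps 1, 2 and 4. The genuine gap is in the increment estimate, exactly at the point you flagged: the endpoint $\alpha_1=\frac12-\sigma$, which the theorem asserts. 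You use only the growth of $G$ together with the interpolated bound $\|A^{\alpha_1}[S(t-r)-S(s-r)]\|\lesssim(t-s)^\delta(s-r)^{-\alpha_1-\delta}$ under the self-imposed constraint $\delta\le\frac12-\alpha_1$, and at the endpoint this forces $\delta\le\sigma$. Your near/far repair then does not close: with $\delta=\sigma$ the far region produces
\begin{equation*}
(t-s)^{2\sigma}\int_0^{2s-t}(s-r)^{-2\alpha_1-2\sigma}r^{2\beta-2}\,dr
=(t-s)^{2\sigma}\int_0^{2s-t}(s-r)^{-1}r^{2\beta-2}\,dr
\;\gtrsim\;(t-s)^{2\sigma}s^{2\beta-2}\ln\frac{s}{2(t-s)},
\end{equation*}
and no modulus $m(t)$ satisfies $s^{2\beta-2}\ln\frac{s}{2(t-s)}\le m(t)^2s^{2(\beta-\sigma-1)}$ uniformly in $0<s<t\le T$ (fix $s$, let $t\downarrow s$: the left side diverges, the right side is fixed); with $\delta<\sigma$ the failure is even a power of $s/(t-s)$. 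So the claim $\mathbb E\|A^{\alpha_1}X\|\in\mathcal F^{\beta,\sigma}((0,T];\mathbb R)$, which requires the full weight $s^{1-\beta+\sigma}(t-s)^{-\sigma}$, is unproved precisely at $\alpha_1=\frac12-\sigma$. (For $\alpha_1<\frac12-\sigma$ strictly, and for the pathwise $\gamma$-H\"older statement with $\gamma<\sigma$, your argument does go through.)

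The paper avoids this by a structurally different decomposition: in its Step 3 it adds and subtracts $G(t)$ and $G(s)$ inside the stochastic integrals (six terms $K_1,\dots,K_6$), so that every use of the lossy factor $\|[S(t-s)-I]A^{\sigma-1}\|\lesssim(t-s)^\sigma$ is paired with a H\"older increment $G(r)-G(s)$, whose factor $(s-r)^{\sigma}$ from (H4) cancels the $(s-r)^{-\sigma}$ generated by $A^{\alpha_1+\sigma}S(s-r)$; the overlap integral then converges at the endpoint with no logarithm, and the factors $w_G(\cdot)$ and powers of $t$ give the vanishing modulus needed for \eqref{P4}. Your route --- which, notably, never uses the H\"older continuity of $G$ at all --- can in fact be repaired, but only by abandoning the constraint $\delta\le\frac12-\alpha_1$ on the far region: there $s-r\ge t-s>0$, so the sharp first-order bound $\|A^{\alpha_1}[S(t-r)-S(s-r)]\|\le\int_0^{t-s}\|A^{1+\alpha_1}S(\rho+s-r)\|\,d\rho\lesssim(t-s)(s-r)^{-1-\alpha_1}$ (i.e.\ $\delta=1$) is legitimate, and it yields far-region contributions bounded by $C\bigl[s^{2\beta-2}(t-s)^{1-2\alpha_1}+(t-s)^2s^{2\beta-3-2\alpha_1}\bigr]\le C\,t^{2\sigma}s^{2(\beta-\sigma-1)}(t-s)^{2\sigma}$, log-free even at the endpoint, with the factor $t^{2\sigma}$ supplying $m(t)\to0$. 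Either carry out that repair explicitly or adopt the paper's add-and-subtract decomposition; as written, the endpoint case of the theorem is not established.
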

In general, we have
\begin{theorem} \label{theorem3}
 Let {\rm (H1)}, {\rm (H2)},  {\rm (H3)} and {\rm (H4)} be satisfied. Let  $\xi$  take values in $\mathcal D(A^\beta) $ a.s.  such that $ \mathbb E\|A^\beta \xi\| <\infty$.    If $\alpha\leq \frac{1}{2}-\sigma$ 
 then there exists a unique  mild solution of \eqref{P12} possessing the regularity:
$$X\in \mathcal C((0,T];\mathcal D(A^\nu)), \quad A^{\alpha} X\in  \mathcal C^\gamma([\epsilon,T];H) \hspace{1cm}\text{ a.s.},$$
and 
$$\mathbb E \|A^{\alpha} X\|\in  \mathcal F^{\beta,\sigma}((0,T];\mathbb R) $$
  for every $0<\nu<\frac{1}{2},  0<\gamma<\sigma$ and $\epsilon\in (0,T]$. Furthermore,  the estimate
  \begin{equation*} 
  \mathbb E\|X(t)\|\leq C[\mathbb E\|A^\beta\xi\|+\|A^{-\alpha}F\|_{F^{\beta,\sigma}} t^{\beta-\alpha}+\|G\|_{\mathcal F^{\beta, \sigma} ((0,T];L_2(U;H))} t^{\beta-\frac{1}{2}}] 
  \end{equation*}
  holds true for every $t\in [0,T]$, where $C$ is some constant depending only on the exponents and  constants $\iota_\theta, \upsilon_\theta \,(\theta\geq 0)$.
\end{theorem}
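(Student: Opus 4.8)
\section*{Proof proposal}

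The plan is to exploit the linearity of \eqref{P12} and split the mild solution into a purely stochastic part and a purely deterministic part, each of which has already been treated. Writing
$$X(t)=Y(t)+Z(t),\qquad Y(t)=S(t)\xi+\int_0^tS(t-s)G(s)\,dW(s),\quad Z(t)=\int_0^tS(t-s)F(s)\,ds,$$
I observe that $Y$ is precisely the mild solution of \eqref{P12} with $F\equiv 0$ and the same data $\xi,G$, so that Theorem~\ref{theorem2} applies under (H1), (H2), (H4); while $Z$ is the mild solution of \eqref{P1} with $\xi=0$ and the same $F$, so that Theorem~\ref{theorem1} applies under (H1), (H2), (H3). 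First I would verify that the two integrals in Definition~\ref{Def1} are finite: using $F(s)=A^\alpha\bigl(A^{-\alpha}F(s)\bigr)$ together with \eqref{P7} and \eqref{P5} gives $\|S(t-s)F(s)\|\le\iota_\alpha\|A^{-\alpha}F\|_{\mathcal F^{\beta,\sigma}}(t-s)^{-\alpha}s^{\beta-1}$, integrable on $(0,t)$ since $\alpha<1$ and $\beta>0$, while the square-integrability $\int_0^T\|S(t-s)G(s)\|_{L_2(U;H)}^2\,ds<\infty$ holds under (H4) as used in Theorem~\ref{theorem2}. Hence $X$ is a well-defined mild solution, and since the defining formula determines $X$ uniquely, uniqueness is immediate.

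The spatial and pathwise temporal regularity then follow by superposition. Since $\alpha\le\frac12-\sigma$, the exponent $\alpha$ is an admissible choice of $\alpha_1$ in Theorem~\ref{theorem2}, yielding $Y\in\mathcal C((0,T];\mathcal D(A^\nu))$ and $A^\alpha Y\in\mathcal C^\gamma([\epsilon,T];H)$ a.s.\ for $0<\nu<\frac12$, $0<\gamma<\sigma$. Theorem~\ref{theorem1} gives $Z\in\mathcal C((0,T];\mathcal D(A^{1-\alpha}))$ and $A^\alpha Z\in\mathcal C([0,T];H)\cap\mathcal F^{\beta,\sigma}((0,T];H)$ (taking $\gamma=\sigma$ there). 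Since $\alpha\le\frac12-\sigma<\frac12$ we have $1-\alpha>\frac12>\nu$, so $\mathcal D(A^{1-\alpha})\subset\mathcal D(A^\nu)$; and since the weight $s^{1-\beta+\sigma}$ is bounded above and below on $[\epsilon,T]$, the $\mathcal F^{\beta,\sigma}$-membership of $A^\alpha Z$ forces ordinary $\sigma$-H\"older continuity there. Adding the two summands gives $X\in\mathcal C((0,T];\mathcal D(A^\nu))$ and $A^\alpha X\in\mathcal C^\gamma([\epsilon,T];H)$ a.s.

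The delicate point is the assertion $\mathbb E\|A^\alpha X\|\in\mathcal F^{\beta,\sigma}((0,T];\mathbb R)$, because the norm of a sum is not the sum of norms, so I cannot simply add the scalar functions $\mathbb E\|A^\alpha Y\|$ and $\|A^\alpha Z\|$. Instead I would work at the level of the $H$-valued increments, which \emph{do} split additively:
$$\mathbb E\|A^\alpha X(t)-A^\alpha X(s)\|\le \mathbb E\|A^\alpha Y(t)-A^\alpha Y(s)\|+\|A^\alpha Z(t)-A^\alpha Z(s)\|.$$
The second term is bounded by $\|A^\alpha Z\|_{\mathcal F^{\beta,\sigma}}(t-s)^\sigma s^{\beta-\sigma-1}$ via \eqref{P5}; for the first term I would extract from the proof of Theorem~\ref{theorem2} the weighted-H\"older increment bound $\mathbb E\|A^\alpha Y(t)-A^\alpha Y(s)\|\le C(t-s)^\sigma s^{\beta-\sigma-1}$, which is exactly the intermediate estimate yielding $\mathbb E\|A^\alpha Y\|\in\mathcal F^{\beta,\sigma}$. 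The reverse triangle inequality $\bigl|\,\mathbb E\|A^\alpha X(t)\|-\mathbb E\|A^\alpha X(s)\|\,\bigr|\le\mathbb E\|A^\alpha X(t)-A^\alpha X(s)\|$ then transfers these bounds to $\phi(t):=\mathbb E\|A^\alpha X(t)\|$, giving \eqref{P3} and, since the associated $w$-functions of the two summands vanish as $t\to0$, also \eqref{P4}. Property \eqref{P2} follows by comparison with $\mathbb E\|A^\alpha Y\|$, using that $t^{1-\beta}\|A^\alpha Z(t)\|\to0$ when $\beta<1$ (as $A^\alpha Z$ is continuous up to $0$), while for $\beta=1$ the required continuity up to $t=0$ follows from that of the two summands, with $A^\alpha Z(0)=0$ by closedness of $A^\alpha$. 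I expect this verification of membership in $\mathcal F^{\beta,\sigma}$ to be the main obstacle, precisely because it must be carried out on the $H$-valued increments rather than on the final scalar quantities.

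Finally, the a priori estimate follows from $\mathbb E\|X(t)\|\le\mathbb E\|Y(t)\|+\|Z(t)\|$ by adding \eqref{P13} of Theorem~\ref{theorem2} (applied to $Y$) and \eqref{P11} of Theorem~\ref{theorem1} with $\xi=0$ (applied to $Z$); this reproduces the three terms $\mathbb E\|A^\beta\xi\|$, $\|A^{-\alpha}F\|_{\mathcal F^{\beta,\sigma}}t^{\beta-\alpha}$ and $\|G\|_{\mathcal F^{\beta,\sigma}((0,T];L_2(U;H))}t^{\beta-\frac12}$ with a constant depending only on the exponents and on $\iota_\theta,\upsilon_\theta$.
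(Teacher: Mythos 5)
Your proposal is correct and takes exactly the paper's route: the paper's proof of Theorem \ref{theorem3} consists of the single observation that the hypotheses of Theorems \ref{theorem1} and \ref{theorem2} are fulfilled and that the result follows from those two theorems, i.e.\ precisely the superposition $X=Y+Z$ you describe. The details you supply --- checking that $\alpha$ is an admissible choice of $\alpha_1$, passing to $H$-valued increments to establish $\mathbb E\|A^{\alpha}X\|\in\mathcal F^{\beta,\sigma}((0,T];\mathbb R)$, and summing the two a priori estimates --- are exactly the verifications the paper leaves implicit.
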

\begin{remark} 
According to  the proof of the above theorems in the next section, we can see that the results in this paper hold true for not only non-random functions $F(t)$ and $ G(t)$ but also  random ones, i.e. for $F$ and $ G$ depending on both $t\in [0,T]$ and $\omega\in \Omega$ and satisfying {\rm (H3)} and {\rm  (H4)} almost surely. In that case, we shall need more conditions on $F$ and $G$ such as
\begin{itemize}
  \item  [{\rm (i)}] $F$ and $G$ are measurable with respect to $(t,\omega).$ 
  \item [{\rm (ii)}] $\mathbb E\|G\|_{\mathcal F^{\beta, \sigma}((0,T];L_2(U;H))}^2 <\infty.$
\end{itemize}
\end{remark}
The rest of the present paper is organized as follows. In the next section, we shall present the proofs of Theorems \ref{theorem1}, \ref{theorem2} and \ref{theorem3}.  In Section \ref{section3}, we shall apply the main results to a model arising in neurophysiology.
\section{Proof of the main theorems}   \label{section2}
\begin{proof}[Proof of Theorem \ref{theorem1}]
 Let us show that \eqref{P1} possesses a unique mild solution in the space
 $$X\in \mathcal C((0,T];\mathcal D(A^{1-\alpha})), $$
 which satisfies the estimate \eqref{P11}. By  \eqref{P5}, \eqref{P7} and {\rm (H3)},  we have
\begin{equation} \label{P14}
\begin{aligned}
\int_0^t\|S(t-s) F(s)\|ds&=\int_0^t\|A^\alpha S(t-s) A^{-\alpha} F(s)\|ds\\
&\leq \iota_\alpha \|A^{-\alpha}F\|_{\mathcal F^{\beta,\sigma}} \int_0^t  (t-s)^{-\alpha}s^{\beta-1}ds\\
& =\iota_\alpha \|A^{-\alpha}F\|_{\mathcal F^{\beta,\sigma}} B(\beta,1-\alpha)t^{\beta-\alpha} <\infty, \hspace{1cm} t\in [0,T].
\end{aligned}
\end{equation}
Then the integral $\int_0^tS(t-s) F(s)ds$ is well-defined. Consequently, 
\eqref{P1} possesses a  mild solution given by
$$X(t)=S(t)\xi +\int_0^tS(t-s) F(s)ds$$
(the uniqueness is obvious). 
On the other hand, 
\begin{align}
&\int_0^t A^{1-\alpha}S(t-s) F(s)ds  \notag\\
&=\int_0^tA S(t-s) [A^{-\alpha} F(s)-A^{-\alpha} F(t)]ds+ \int_0^tA S(t-s)  dsA^{-\alpha} F(t)  \notag\\
&=\int_0^tA S(t-s) [A^{-\alpha} F(s)-A^{-\alpha} F(t)]ds+ [I-S(t)]A^{-\alpha} F(t).   \label{P15}
\end{align}
Thanks to \eqref{P5}, \eqref{P7} and {\rm (H3)},
\begin{align*}
&\int_0^t\|A S(t-s) [A^{-\alpha} F(s)-A^{-\alpha} F(t)]\|ds\\
&\leq \int_0^t\|A S(t-s)\|  \|A^{-\alpha} F(s)-A^{-\alpha} F(t)\|ds\\
&\leq \iota_1 \|A^{-\alpha}F\|_{\mathcal F^{\beta,\sigma}} \int_0^t  (t-s)^{\sigma-1}s^{\beta-\sigma-1}ds\\
& =\iota_1 \|A^{-\alpha}F\|_{\mathcal F^{\beta,\sigma}} B(\beta-\sigma,\sigma)t^{\beta-1} <\infty, \hspace{1cm} t\in (0,T].
\end{align*}
Hence, it is easy to see that $\int_0^t AS(t-s) [A^{-\alpha} F(s)-A^{-\alpha} F(t)]ds$ is continuous in $t\in (0,T].$
In addition, since $A$ is closed, we obtain
$$A\int_0^t S(t-s) [A^{-\alpha} F(s)-A^{-\alpha} F(t)]ds=\int_0^tA S(t-s) [A^{-\alpha} F(s)-A^{-\alpha} F(t)]ds.$$
 Therefore, by \eqref{P15}, it is seen that 
$$A^{1-\alpha} \int_0^t S(t-s) F(s)ds=\int_0^t A^{1-\alpha}S(t-s) F(s)ds$$
and $A^{1-\alpha} \int_0^t S(t-s) F(s)ds$ is continuous in $t\in (0,T]$. As a consequence,
$$A^{1-\alpha}X(t)=A^{1-\alpha}S(t)\xi+A^{1-\alpha} \int_0^t S(t-s) F(s)ds$$
belongs to $\mathcal C((0,T];H)$. Furthermore, by  \eqref{P8} and \eqref{P14}, 
$X$ satisfies the estimate \eqref{P11}.

Let us next prove that $A^{\alpha} X(t)=A^{\alpha} S(t)\xi +A^{\alpha} \int_0^tS(t-s) F(s)ds$ is continuous in $t\in [0,T]$. 
Indeed, the first term in the right-hand side of the latter equality is continuous on $[0,T]$, because $\alpha<\beta$ and
$A^{\alpha} S(t)\xi=A^{\alpha-\beta} S(t)A^\beta \xi.$  So, it suffices  to show that  the second term is also  continuous on $[0,T]$. 
 Similarly to \eqref{P14}, we have
\begin{equation*} 
\begin{aligned}
\int_0^t\|A^\alpha S(t-s) F(s)\|ds\leq 
\iota_{2\alpha} \|A^{-\alpha}F\|_{\mathcal F^{\beta,\sigma}} B(\beta,1-2\alpha)t^{\beta-2\alpha} <\infty, \hspace{0.2cm} t\in [0,T].
\end{aligned}
\end{equation*}
In addition, since $A^{\alpha}$ is closed, we obtain 
$$A^{\alpha} \int_0^tS(t-s) F(s)ds=\int_0^tA^{\alpha} S(t-s) F(s)ds.$$
Hence, it is easy to see that    $A^{\alpha} \int_0^tS(t-s) F(s)ds$ is  continuous in $t\in [0,T].$

Let us now verify that $A^{\alpha} X\in \mathcal F^{\beta-\sigma+\gamma,\gamma}((0,T];H)$ for every $\gamma\in [0,\sigma]$. We  use the expression
\begin{align*}
A^{\alpha} X(t)=& A^{\alpha}S(t) \xi+\int_0^t A^{2\alpha} S(t-s)[A^{-\alpha}F(s)-A^{-\alpha}F(t)]ds\\
&+\int_0^t A^{2\alpha} S(t-s)dsA^{-\alpha}F(t)\\
=& A^{\alpha}S(t) \xi+\int_0^t A^{2\alpha} S(t-s)[A^{-\alpha}F(s)-A^{-\alpha}F(t)]ds\\
&+A^{2\alpha-1}[I-S(t)]A^{\alpha-1} F(t)\\
=&J_1(t)+J_2(t)+J_3(t).
\end{align*}
We will show that $J_1, J_2$ and $ J_3$ belong to  $\mathcal F^{\beta-\sigma+\gamma,\gamma}((0,T];H)$.

{\it Proof for $J_1$}. We prove that 
$$J_1\in  \mathcal F^{\beta,\gamma}((0,T];H) \subset \mathcal F^{\beta-\sigma+\gamma,\gamma}((0,T];H)\hspace{2cm} (\text{see }\eqref{P6}). $$
Indeed, applying \eqref{P10} to  
$$t^{1-\beta} A^{\alpha}S(t) \xi=A^{\alpha-1} t^{1-\beta} A^{1-\beta}S(t) A^\beta \xi,$$
 we obtain
$\lim_{t\to 0} t^{1-\beta} A^{\alpha}S(t) \xi=0.$
Hence, the condition \eqref{P2} is fulfilled.

On the other hand,  by using \eqref{P7},  for $0<s<t\leq T$ we have
\begin{align}
&\frac{s^{1-\beta+\gamma} \|A^\alpha[S(t)-S(s)] \xi\|}{(t-s)^\gamma} \notag\\
&\leq \frac{\|A^{\alpha-\gamma-1}[S(t-s)-I]\|}{(t-s)^\gamma}  s^{1-\beta+\gamma}\|A^{1-\beta+\gamma}S(s) A^\beta \xi\|\notag\\
&\leq  \frac{\|\int_0^{t-s} A^{\alpha-\gamma}S(u)du\|}{(t-s)^\gamma} \iota_{1-\beta+\gamma} \|A^\beta \xi\|.  \label{P16}
\end{align}
If $\alpha\geq \gamma$ then
\begin{align}
\frac{s^{1-\beta+\gamma} \|A^\alpha[S(t)-S(s)] \xi\|}{(t-s)^\gamma}&\leq  \frac{ \int_0^{t-s} \iota_{\alpha-\gamma} u^{\gamma-\alpha}du}{(t-s)^\gamma} \iota_{1-\beta+\gamma} \|A^\beta \xi\|\notag\\
&=  \frac{\iota_{\alpha-\gamma} \iota_{1-\beta+\gamma} \|A^\beta \xi\| }{1+\gamma-\alpha}(t-s)^{1-\alpha}. \label{P17}
\end{align}
If $\alpha< \gamma$ then by \eqref{P8} and \eqref{P9}
\begin{align}
\frac{s^{1-\beta+\gamma} \|A^\alpha[S(t)-S(s)] \xi\|}{(t-s)^\gamma}&\leq  \frac{ \int_0^{t-s} \upsilon_{\gamma-\alpha} \iota_0du}{(t-s)^\gamma} \iota_{1-\beta+\gamma} \|A^\beta \xi\|\notag\\
&=  \upsilon_{\gamma-\alpha} \iota_0\iota_{1-\beta+\gamma} \|A^\beta \xi\| (t-s)^{1-\gamma}. \label{P18}
\end{align}
Hence,
$$\sup_{0\leq s<t\leq T}\frac{s^{1-\beta+\gamma} \|A^\alpha[S(t)-S(s)] \xi\|}{(t-s)^\gamma}<\infty$$
and
\begin{align*}
&\lim_{t\to 0} \sup_{0<s<t}\frac{s^{1-\beta+\gamma} \|A^\alpha[S(t)-S(s)] \xi\|}{(t-s)^\gamma}=0.
\end{align*}
Therefore, the conditions \eqref{P3} and  \eqref{P4} are  also satisfied. We have thus verified that $J_1\in  \mathcal F^{\beta,\gamma}((0,T];H). $

{\it Proof for $J_2$}. Using \eqref{P5} and {\rm (H3)} we have 
\begin{align*}
\|J_2(t)\|&\leq \int_0^t \|A^{2\alpha} S(t-s)\|\|A^{-\alpha}F(s)-A^{-\alpha}F(t)\|ds\notag\\
&\leq \iota_{2\alpha} w_{A^{-\alpha}F}(t)\int_0^t (t-s)^{\sigma-2\alpha} s^{\beta-\sigma-1}ds\notag\\
&=\iota_{2\alpha} w_{A^{-\alpha}F}(t) B(\beta-\sigma,1+\sigma-2\alpha)t^{\beta-2\alpha}, 
\end{align*}
here
 $$
  w_{A^{-\alpha}F}(t)=\sup_{0\leq s  <t}\frac{s^{1-\beta+\sigma}\|A^{-\alpha}F(t)-A^{-\alpha}F(s)\|}{(t-s)^\sigma} \hspace{1cm} (\text{see  } \eqref{P4}).
$$
This implies that 
\begin{equation} \label{P19}
\lim_{t\to 0} t^{1-(\beta-\sigma+\gamma)} J_2(t)=0  \hspace{1cm} \text{ in } H.
\end{equation} 

We next observe that for $0<s<t\leq T$
\begin{align}
J_2(t)-J_2(s)=&\int_s^t A^{2\alpha} S(t-u)[A^{-\alpha} F(u)-A^{-\alpha} F(t)] du   \notag\\
&+ [S(t-s) -I] \int_0^s A^{2\alpha} S(s-u)[A^{-\alpha} F(u)-A^{-\alpha} F(s)] du \notag\\
&+\int_0^s A^{2\alpha} S(t-u)[A^{-\alpha} F(s)-A^{-\alpha} F(t)] du\notag\\
=&J_{21}(t,s)+J_{22}(t,s)+J_{23}(t,s).    \label{E19}
\end{align}

For $J_{21}(t,s)$, by \eqref{P5} and \eqref{P7} we have the estimate
\begin{align}
\|J_{21}(t,s)\|\leq & \int_s^t \|A^{2\alpha} S(t-u)\| \|A^{-\alpha} F(u)-A^{-\alpha} F(t)\| du   \notag\\
\leq &\int_s^t \iota_{2\alpha} w_{A^{-\alpha} F}(t)(t-u)^{\sigma-2\alpha}   u^{\beta-\sigma-1}du\notag\\
\leq &\iota_{2\alpha} w_{A^{-\alpha} F}(t)s^{\beta-\sigma-1}\int_s^t  (t-u)^{\sigma-2\alpha}   du\notag\\
=&\frac{\iota_{2\alpha} w_{A^{-\alpha} F}(t)s^{(\beta-\sigma+\gamma)-\gamma-1} (t-s)^{1+\sigma-2\alpha}}{1+\sigma-2\alpha}\notag\\
\leq &C_1 w_{A^{-\alpha} F}(t)s^{(\beta-\sigma+\gamma)-\gamma-1} (t-s)^\gamma,   \label{P20}
\end{align}
where $C_1>0$ is some positive constant.

For $J_{22}(t,s)$, we estimate its norm as follows.
\begin{align}
&\|J_{22}(t,s)\|   \notag\\
&=\Big \|\int_0^{t-s} AS(r)dr  \int_0^s A^{2\alpha} S(s-u)[A^{-\alpha} F(u)-A^{-\alpha} F(s)] du \Big \|\notag\\
&=\Big \|\int_0^{t-s}  \int_0^s A^{1+2\alpha} S(r+s-u)[A^{-\alpha} F(u)-A^{-\alpha} F(s)] du dr\Big \|\notag\\
&\leq \iota_{1+2\alpha} w_{A^{-\alpha} F}(s) \int_0^{t-s}  \int_0^s  (r+s-u)^{-1-2\alpha}   (s-u)^{\sigma}   u^{\beta-\sigma-1}du dr\notag\\
&=\frac{\iota_{1+2\alpha}w_{A^{-\alpha} F}(s)}{2\alpha}   \int_0^s  [(s-u)^{-2\alpha}-(t-u)^{-2\alpha}]   (s-u)^{\sigma}   u^{\beta-\sigma-1}du \notag\\
&=\frac{\iota_{1+2\alpha}w_{A^{-\alpha} F}(s)}{2\alpha}   \int_0^s  [(t-u)^{2\alpha}-(s-u)^{2\alpha}] (t-u)^{-2\alpha}  (s-u)^{\sigma-2\alpha} \notag\\
&\hspace{4cm} \times  u^{\beta-\sigma-1}du \notag\\
&\leq \frac{\iota_{1+2\alpha}(t-s)^{2\alpha} w_{A^{-\alpha} F}(s)}{2\alpha}   \int_0^s   (t-u)^{-2\alpha}  (s-u)^{\sigma-2\alpha}   u^{\beta-\sigma-1}du \notag\\
&= \frac{\iota_{1+2\alpha} w_{A^{-\alpha} F}(s)}{2\alpha}  (t-s)^{2\alpha} \int_0^s   (t-s+u)^{-2\alpha}  u^{\sigma-2\alpha}   (s-u)^{\beta-\sigma-1}du,    \label{E21}
\end{align}
here we used the inequality $(t-u)^{2\alpha}-(s-u)^{2\alpha}\leq (t-s)^{2\alpha}.$
We have
\begin{align*}
&(t-s)^{2\alpha}\int_{\frac{s}{2}}^s   (t-s+u)^{-2\alpha}  u^{\sigma-2\alpha}   (s-u)^{\beta-\sigma-1}du\\
=& (t-s)^{\gamma}\int_{\frac{s}{2}}^s   (t-s)^{2\alpha-\gamma} (t-s+u)^{-2\alpha}  u^{1+\sigma-2\alpha} u^{-1}   (s-u)^{\beta-\sigma-1}du\\
\leq & 2(t-s)^{\gamma} s^{-1}\int_{\frac{s}{2}}^s  [ (t-s)^{2\alpha-\gamma} (t-s+u)^{-2\alpha}  u^{1+\sigma-2\alpha}]  (s-u)^{\beta-\sigma-1}du.
\end{align*}
Since there exists $C_2>0$ such that  for every $\frac{s}{2}\leq u\leq s$
\begin{align*}
(t-s)^{2\alpha-\gamma}& (t-s+u)^{-2\alpha}  u^{1+\sigma-2\alpha}\\
&=\Big(\frac{t-s}{t-s+u}\Big)^{2\alpha-\gamma}  \Big(\frac{u}{t-s+u}\Big)^{\gamma} u^{1+\sigma-2\alpha-\gamma} \leq C_2, 
\end{align*}
we obtain 
\begin{align}
&(t-s)^{2\alpha}\int_{\frac{s}{2}}^s   (t-s+u)^{-2\alpha}  u^{\sigma-2\alpha}   (s-u)^{\beta-\sigma-1}du      \notag\\
\leq & 2C_2(t-s)^{\gamma} s^{-1}\int_0^s  (s-u)^{\beta-\sigma-1}du  \notag\\
= & \frac{2C_2(t-s)^{\gamma} s^{(\beta-\sigma+\gamma)-\gamma-1}}{\beta-\sigma}.   \label{E23}
\end{align}
Meanwhile,
\begin{align}
&(t-s)^{2\alpha}\int_0^{\frac{s}{2}}   (t-s+u)^{-2\alpha}  u^{\sigma-2\alpha}   (s-u)^{\beta-\sigma-1}du            \notag \\
\leq &2^{1-\beta+\sigma} s^{\beta-\sigma-1} (t-s)^{2\alpha}\int_0^{\frac{s}{2}}   (t-s+u)^{-2\alpha}  u^{\sigma-2\alpha}  du            \notag \\
\leq &2^{1-\beta+\sigma} s^{\beta-\sigma-1} (t-s)^{1+\sigma-2\alpha}\int_0^\infty   (1+r)^{-2\alpha}  r^{\sigma-2\alpha}  dr            \notag \\
= &2^{1-\beta+\sigma}  (t-s)^{1+\sigma-2\alpha-\gamma}\int_0^\infty   (1+r)^{-2\alpha}  r^{\sigma-2\alpha}  dr  s^{\beta-\sigma-1} (t-s)^\gamma            \notag \\
\leq &2^{1-\beta+\sigma}  T^{1+\sigma-2\alpha-\gamma}\int_0^\infty   (1+r)^{-2\alpha}  r^{\sigma-2\alpha}  dr  s^{\beta-\sigma-1} (t-s)^\gamma            \notag \\
\leq & C_3s^{\beta-\sigma-1} (t-s)^\gamma,   \label{E25}
\end{align}
where $$C_3=2^{1-\beta+\sigma}  T^{1+\sigma-2\alpha-\gamma}\int_0^\infty   (1+r)^{-2\alpha}  r^{\sigma-2\alpha}  dr <\infty \hspace{0.4cm} \text {(since }  1+\sigma-4\alpha<0). $$
Taking the sum of \eqref{E23} and \eqref{E25} and substituting it for the integral in the right-hand side of  \eqref{E21}, we gain 
\begin{equation} \label{P21}
\|J_{22}(t,s)\|\leq \Big(\frac{2C_2}{\beta-\sigma}+C_3\Big) \frac{\iota_{1+2\alpha}}{2\alpha}       w_{A^{-\alpha} F}(s)s^{(\beta-\sigma+\gamma)-\gamma-1}  (t-s)^{\gamma}. 
\end{equation}

For $J_{23}(t,s)$, by using \eqref{P5} 
\begin{align*}
\|J_{23}(t,s)\|&= \|A^{2\alpha-1} [S(t-s)-S(t)]  [A^{-\alpha} F(s)-A^{-\alpha} F(t)]\| \\
&\leq  \|A^{2\alpha-1} [S(t-s)-S(t)]\| (t-s)^{\sigma -\gamma}w_{A^{-\alpha} F}(t)s^{\beta-\sigma-1}   (t-s)^{\gamma}.
\end{align*}
Then it is  clear that there exists $C_4>0$ such that
\begin{equation} \label{P22} 
\|J_{23}(t,s)\| \leq C_4 w_{A^{-\alpha} F}(t)s^{(\beta-\sigma+\gamma)-\gamma-1} (t-s)^\gamma. 
\end{equation}
Thanks to \eqref{P19}, \eqref{E19},  \eqref{P20}, \eqref{P21} and \eqref{P22}, we conclude that 
$$J_2\in \mathcal F^{\beta-\sigma+\gamma,\gamma}((0,T];H).$$

{\it Proof for $J_3$}. Since $t^{1-\beta} A^{-\alpha}F(t)$ has a limit as $t\to 0$, 
\begin{equation} \label{E27} 
\lim_{t\to 0} t^{1-\beta}J_3(t)=\lim_{t\to 0} A^{2\alpha-1}[I-S(t)]t^{1-\beta} A^{-\alpha}F(t)=0.
\end{equation}
 We next write
\begin{align}
J_3(t)-J_3(s)=&A^{2\alpha-1}[I-S(t)][A^{-\alpha}F(t)-A^{-\alpha}F(s)]   \notag\\
&+A^{2\alpha-1}[I-S(t-s)]S(s)A^{-\alpha}F(s).  \label{E29} 
\end{align}

Let us give estimate for the norm of the first term in the right-hand side of the latter equality. Due to \eqref{P5}, \eqref{P8} and \eqref{P9}  there exists $C_5>0$ such that 
\begin{align}
&\|A^{2\alpha-1}[I-S(t)][A^{-\alpha}F(t)-A^{-\alpha}F(s)]\| \notag\\
&\leq \|A^{2\alpha-1}[I-S(t)]\| w_{A^{-\alpha}F}(t) s^{\beta-\sigma-1} (t-s)^\sigma \notag\\
&\leq C_5 w_{A^{-\alpha}F}(t) s^{(\beta-\sigma+\gamma)-\gamma-1} (t-s)^\gamma.  \label{P23}
\end{align}

For the norm of the second term, we have
\begin{align*}
&\|A^{2\alpha-1}[S(t-s)-I]S(s)A^{-\alpha}F(s)\|\\
&\leq \|[S(t-s)-I]A^{2\alpha-\sigma-1}\|  s^{\beta-\sigma-1} \|s^\sigma A^\sigma  S(s) s^{1-\beta}A^{-\alpha}F(s)\|\\
&\leq \Big\|\int_0^{t-s} A^{2\alpha-\sigma} S(r) dr\Big\| s^{\beta-\sigma-1} \|  s^\sigma A^\sigma S(s) s^{1-\beta}A^{-\alpha}F(s)\|.
\end{align*}
If $2\alpha\geq \sigma$ then
\begin{align*}
&\|A^{2\alpha-1}[S(t-s)-I]S(s)A^{-\alpha}F(s)\|\\
&\leq \int_0^{t-s}  \iota_{2\alpha-\sigma} r^{\sigma-2\alpha}dr s^{\beta-\sigma-1} \| s^\sigma A^\sigma  S(s) s^{1-\beta}A^{-\alpha}F(s)\|\\
&= \frac{\iota_{2\alpha-\sigma}}{1-2\alpha+\sigma}  (t-s)^{1-2\alpha+\sigma-\gamma } (t-s)^\gamma s^{\beta-\sigma-1} \| s^\sigma A^\sigma  S(s) s^{1-\beta}A^{-\alpha}F(s)\|\\
&\leq \frac{\iota_{2\alpha-\sigma}T^{1-2\alpha+\sigma-\gamma }}{1-2\alpha+\sigma}   (t-s)^\gamma s^{(\beta-\sigma+\gamma) -\gamma-1} \| s^\sigma A^\sigma  S(s) s^{1-\beta}A^{-\alpha}F(s)\|.
\end{align*}
Meanwhile, if $2\alpha < \sigma$ then by using \eqref{P7} and \eqref{P9} 
\begin{align*}
&\|A^{2\alpha-1}[S(t-s)-I]S(s)A^{-\alpha}F(s)\|\\
&\leq \int_0^{t-s}  \upsilon_{\sigma-2\alpha} \iota_0 dr s^{\beta-\sigma-1} \| s^\sigma A^\sigma  S(s) s^{1-\beta}A^{-\alpha}F(s)\|\\
&=\upsilon_{\sigma-2\alpha} \iota_0 (t-s)^{1-\gamma} (t-s)^\gamma  s^{(\beta-\sigma+\gamma) -\gamma-1}  \| s^\sigma A^\sigma  S(s) s^{1-\beta}A^{-\alpha}F(s)\|\\
&\leq \upsilon_{\sigma-2\alpha} \iota_0 T^{1-\gamma} (t-s)^\gamma  s^{(\beta-\sigma+\gamma) -\gamma-1}  \| s^\sigma A^\sigma  S(s) s^{1-\beta}A^{-\alpha}F(s)\|.
\end{align*}
Hence, there exists $C_6>0$ such that
\begin{align}
&\|A^{2\alpha-1}[S(t-s)-I]S(s)A^{-\alpha}F(s)\| \notag\\
&\leq C_6  (t-s)^\gamma  s^{(\beta-\sigma+\gamma) -\gamma-1}  \| s^\sigma A^\sigma  S(s) s^{1-\beta}A^{-\alpha}F(s)\|. \label{P24}
\end{align}

On the other hand, since $ s^{1-\beta}A^{-\alpha}F(s)$ has a limit as $s\to 0$, by \eqref{P10} 
$$\lim_{s\to 0} \| s^\sigma A^\sigma  S(s) s^{1-\beta}A^{-\alpha}F(s)\|=0.$$
According to  \eqref{E27}, \eqref{E29},  \eqref{P23} and \eqref{P24},  $J_3$ is then verified to be in $\mathcal F^{\beta-\sigma+\gamma,\gamma}((0,T];H).$

We have thus proved that $A^{\alpha} X$ belongs to $ \mathcal F^{\beta-\sigma+\gamma,\gamma}((0,T];H)$ for every $0\leq \gamma \leq \sigma$. It completes the proof of the theorem.
\end{proof}
\begin{proof}[Proof of Theorem \ref{theorem2}]
Throughout this proof, the notation $C$ will stand for a universal constant which is determined in each occurrence by the exponents and by constants $\iota_\theta, \upsilon_\theta \,(\theta\geq 0)$ in a specific way. We will use  the following property of stochastic integrals (see \cite{prato} for the proof). 
\begin{lemma} \label{lemma1.5}
Let  $ \phi $ be a measurable  function from $ ([0,T],\mathcal B([0,T]))$ to $(L_2(U;H),$ $\mathcal B(L_2(U;H)))$  satisfying  the condition
$$\int_0^t \|\phi(s)\|_{L_2(U;H)}^2 ds<\infty, \hspace{2cm}  t\in [0,T]. $$
Then the stochastic integral $\int_0^t \phi(s) dW(s)$    is well-defined  and is continuous in $ t\in [0,T]. $  Furthermore, 
$$ \mathbb E\Big\| \int_0^t \phi(s) dW(s)\Big\|^2=\int_0^t\|\phi(s)\|_{L_2(U;H)}^2 ds.$$
In fact, the integral is a  continuous square integrable martingale on $[0,T]$. Here and then, the continuity means  that almost surely  trajectories of the stochastic process are continuous in time.
\end{lemma}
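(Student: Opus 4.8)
The plan is to construct the integral by the classical It\^o scheme—define it on elementary integrands, extend by the isometry, and then upgrade to a continuous martingale—taking advantage of the fact that here $\phi$ is \emph{deterministic}. Fix an orthonormal basis $\{e_k\}_{k\geq 1}$ of $U$ and expand the cylindrical Wiener process as $W(t)=\sum_{k\geq 1}\beta_k(t)e_k$, where $\{\beta_k\}_{k\geq 1}$ is a family of mutually independent standard real $(\mathcal F_t)$-Brownian motions. For an elementary integrand $\phi(s)=\sum_{j=0}^{n-1}\phi_j\mathbf{1}_{(t_j,t_{j+1}]}(s)$ with $\phi_j\in L_2(U;H)$ and $0=t_0<\cdots<t_n=T$, set
\[
\int_0^t\phi(s)\,dW(s):=\sum_{j=0}^{n-1}\phi_j\big(W(t_{j+1}\wedge t)-W(t_j\wedge t)\big),
\]
each summand $\phi_j\big(W(b)-W(a)\big)=\sum_k\big(\beta_k(b)-\beta_k(a)\big)\phi_je_k$ being a genuine $H$-valued random variable because $\phi_j$ is Hilbert--Schmidt.

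First I would establish the isometry on elementary integrands. Writing the increments as $\Delta\beta_k^j=\beta_k(t_{j+1}\wedge t)-\beta_k(t_j\wedge t)$ and expanding $\|\cdot\|_H^2$, one uses $\mathbb E[\Delta\beta_k^j\,\Delta\beta_{k'}^{j'}]=0$ whenever $k\neq k'$ or the two time-intervals are disjoint, so that all cross terms vanish and
\[
\mathbb E\Big\|\int_0^t\phi(s)\,dW(s)\Big\|^2=\sum_{j}\sum_{k}\|\phi_je_k\|^2\,(t_{j+1}\wedge t-t_j\wedge t)=\int_0^t\sum_k\|\phi(s)e_k\|^2\,ds.
\]
The inner sum equals $\|\phi(s)\|_{L_2(U;H)}^2$ by the definition of the Hilbert--Schmidt norm, which is the announced identity for elementary $\phi$.

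Next I would extend to general $\phi$ by density. Elementary integrands are dense in $L^2([0,T];L_2(U;H))$, so choosing a sequence $\phi_n\to\phi$ in that norm and applying the elementary isometry to the differences shows that $\big(\int_0^t\phi_n\,dW\big)_n$ is Cauchy in $L^2(\Omega;H)$; its limit defines $\int_0^t\phi\,dW$ independently of the approximating sequence, and letting $n\to\infty$ in the elementary isometry yields the stated identity for every admissible $\phi$. Each elementary integral is a finite combination of the $H$-valued martingales $t\mapsto\phi_j\big(W(t\wedge t_{j+1})-W(t\wedge t_j)\big)$, hence is itself a martingale; this property is preserved under the $L^2(\Omega;H)$ limit, so $M_t:=\int_0^t\phi\,dW$ is a square-integrable martingale.

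The remaining and most delicate point is the existence of a continuous version, and this is where I expect the real work. I would apply Doob's maximal inequality to the $H$-valued martingales $M^n_t=\int_0^t\phi_n\,dW$, which gives
\[
\mathbb E\sup_{0\leq t\leq T}\big\|M^n_t-M^m_t\big\|^2\leq 4\,\mathbb E\big\|M^n_T-M^m_T\big\|^2=4\int_0^T\|\phi_n(s)-\phi_m(s)\|_{L_2(U;H)}^2\,ds,
\]
so $(M^n)_n$ is Cauchy in the space of continuous $H$-valued martingales under the uniform-in-time $L^2$ norm. Since the elementary integrals $M^n$ have continuous trajectories, their uniform $L^2$ limit admits a continuous modification, which is the desired continuous square-integrable martingale representing $\int_0^t\phi\,dW$. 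The crux is precisely this passage from pointwise-in-$t$ to uniform-in-$t$ control, which Doob's inequality supplies; everything else is the routine It\^o completion.
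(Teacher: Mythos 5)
Your proposal is correct, but it is worth knowing that the paper itself contains no proof of this lemma at all: the statement is presented inside the proof of Theorem 2.3 as a known property of stochastic integrals, with the proof delegated wholesale to Da Prato--Zabczyk \cite{prato}. What you have written out is, in substance, exactly the construction that the cited reference performs: the It\^o isometry on elementary integrands, extension by density of elementary integrands in $L^2([0,T];L_2(U;H))$, preservation of the martingale property under $L^2(\Omega;H)$ limits, and Doob's maximal inequality to upgrade convergence at each fixed $t$ to uniform-in-time convergence, whence a continuous modification. So your route differs from the paper's only in that you actually carry out the argument; what this buys is a self-contained lemma and, incidentally, a clear view of the fact that the determinism of $\phi$ is never essentially used (the same proof works for predictable integrands; determinism only trivializes the measurability issues and makes the integral Gaussian, a fact the paper silently exploits in Step 4 of the proof of Theorem 2.3). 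The one sub-step you assert rather than prove is that the elementary integrals themselves have continuous trajectories. This is not automatic here, because $W$ is only cylindrical: each summand $\phi_j\big(W(t\wedge t_{j+1})-W(t\wedge t_j)\big)$ is \emph{defined} as the series $\sum_k\big(\beta_k(t\wedge t_{j+1})-\beta_k(t\wedge t_j)\big)\phi_je_k$, which converges in $L^2(\Omega;H)$ for each fixed $t$ but whose path continuity needs an argument. The fix is the very tool you invoke later: the partial sums are continuous $H$-valued square-integrable martingales, and Doob's inequality applied to the tails gives uniform convergence along a subsequence, hence a continuous modification. With that half-sentence added, your proof is complete.
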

We divide the proof  into several steps.

{\bf Step 1}. Let us observe that there exists a unique continuous mild solution of \eqref{P12}, which satisfies the estimate \eqref{P13}. Indeed, due to \eqref{P5} and \eqref{P8},   we have
\begin{align}
&\int_0^t\|S(t-s)G(s)\|_{L_2(U;H)}^2 ds \notag\\
&\leq \int_0^t\|  S(t-s)\|^2 \|G(s)\|_{L_2(U;H)}^2 ds\notag\\
& \leq  \int_0^t \iota_0^2   \|G\|_{\mathcal F^{\beta,\sigma}((0,T];L_2(U;H))}^2 s^{2(\beta-1)}ds\notag\\
&=\frac{\iota_0^2   \|G\|_{\mathcal F^{\beta,\sigma}((0,T];L_2(U;H))}^2 t^{2\beta-1}}{2\beta-1}<\infty, \hspace{2cm}  t\in [0,T]. \label{P25}
\end{align}
Therefore,  by using Lemma \ref{lemma1.5},  it is easy to see that the stochastic convolution 
$$W_G(t)=\int_0^t S(t-s)G(s)dW(s)$$
 is continuous in $t\in [0,T]$. Thus,
$X(t)=S(t) \xi+ W_G(t)$ is a unique continuous mild solution of \eqref{P12}.  Furthermore, by \eqref{P8}, \eqref{P9} and 
\eqref{P25},
\begin{align*}
\mathbb E\|X(t)\|&\leq \mathbb E\|S(t)\xi\|+\mathbb E\|W_G(t)\|\\
&\leq \mathbb E\|S(t)\xi\|+ \sqrt{\mathbb E\|W_G(t)\|^2}\\
&= \mathbb E\|A^{-\beta}S(t) A^\beta \xi\|+  \sqrt{\int_0^t \|S(t-s)G(s)\|_{L_2(U;H)}^2ds}\\
&\leq C[\mathbb E\|A^\beta\xi\|+\|G\|_{\mathcal F^{\beta, \sigma} ((0,T];L_2(U;H))} t^{\beta-\frac{1}{2}}], \hspace{1cm} t\in [0,T].
 \end{align*}
 
{\bf Step 2}.   Let us show that  for every $\nu\in (0,\frac{1}{2}),$  $X\in \mathcal C((0,T];\mathcal D(A^\nu))$ a.s.. 
Indeed, we have
\begin{align*}
&\int_0^t\|A^\nu S(t-s)G(s)\|_{L_2(U;H)}^2 ds\\
&\leq \int_0^t\| A^\nu  S(t-s)\|^2 \|G(s)\|_{L_2(U;H)}^2 ds\\
& \leq  \int_0^t \iota_\nu^2   \|G\|_{\mathcal F^{\beta,\sigma}((0,T];L_2(U;H))}^2 (t-s)^{-2\nu} s^{2(\beta-1)}ds\\
&=\iota_\nu^2   \|G\|_{\mathcal F^{\beta,\sigma}((0,T];L_2(U;H))}^2 B(2\beta-1,1-2\nu)t^{2(\beta-\nu)-1}\\
&<\infty, \hspace{3cm}  t\in (0,T].
\end{align*}
Lemma \ref{lemma1.5} then provides that the stochastic integral $\int_0^tA^\nu S(t-s)G(s)dW(s)$  is continuous on $(0,T]$. Since $A^\nu$ is closed, we obtain 
$$A^\nu W_G(t)=\int_0^tA^\nu S(t-s)G(s)dW(s).$$
Hence $A^\nu W_G$ is continuous on $(0,T]$.
On the other hand,  since $A^\nu S(t)\xi=A^{\nu-\beta} S(t) A^\beta \xi,$ $A^\nu S(\cdot)\xi$ is continuous on $[0,T]$. In this way, we conclude that 
$A^\nu X(t)=A^\nu S(t)\xi+ A^\nu W_G(t)$ is continuous in $t\in (0,T]$, i.e. for every $\nu\in (0,\frac{1}{2}),$ $X\in \mathcal C((0,T];\mathcal D(A^\nu))$ a.s.. 

{\bf Step 3}.   
Let us verify that for every $\alpha_1\in (0, \frac{1}{2}-\sigma]$
\begin{align*}
\mathbb E\|A^{\alpha_1} W_G(t)-A^{\alpha_1} W_G(s)\|^2 \leq & m(t)^2 s^{2(\beta-\sigma-1)} (t-s)^{2\sigma}, \hspace{1cm} 0< s\leq t\leq T 
\end{align*}
where $m(t)$ is some increasing function    on $(0,T]$ such that $\lim_{t\to 0} m(t)=0.$

From the expression
$$A^{\alpha_1} W_G(t)=\int_0^t A^{\alpha_1} S(t-r)[G(r)-G(t)]dW(r)+\int_0^t A^{\alpha_1}  S(t-r) G(t)dW(r),$$
we have
\begin{align*}
&A^{\alpha_1} W_G(t)-A^{\alpha_1} W_G(s)\\
=&\int_s^t A^{\alpha_1} S(t-r)[G(r)-G(t)]dW(r)\\
&+\int_0^sA^{\alpha_1} S(t-r)[G(r)-G(t)]dW(r)\\
&-\int_0^s A^{\alpha_1} S(s-r)[G(r)-G(s)]dW(r)\\
&+\int_s^t A^{\alpha_1} S(t-r)G(t)dW(r)+\int_0^s A^{\alpha_1} S(t-r)G(t)dW(r)\\
&-\int_0^s A^{\alpha_1} S(s-r)G(s)dW(r)\\
=&\int_s^t A^{\alpha_1} S(t-r)[G(r)-G(t)]dW(r)\\
&+\int_0^s A^{\alpha_1} S(t-s)S(s-r)[G(r)-G(s)+G(s)-G(t)]dW(r)\\
&-\int_0^s A^{\alpha_1} S(s-r)[G(r)-G(s)]dW(r)+\int_s^t A^{\alpha_1} S(t-r)G(t)dW(r)\\
&+\int_0^s [A^{\alpha_1} S(t-r)G(t)-A^{\alpha_1} S(s-r)G(s)]dW(r)\\
=&\int_s^t A^{\alpha_1} S(t-r)[G(r)-G(t)]dW(r)\\
&+\int_0^s [S(t-s)-I]A^{\alpha_1} S(s-r)[G(r)-G(s)]dW(r)\\
&+\int_0^s A^{\alpha_1} S(t-r)[G(s)-G(t)]dW(r)+\int_s^t A^{\alpha_1} S(t-r)G(t)dW(r)\\
&+\int_0^s A^{\alpha_1} S(s-r)[G(t)-G(s)]dW(r)\\
&+\int_0^s A^{\alpha_1} [S(t-r)-S(s-r)]G(t)dW(r)\\
=&K_1+K_2+K_3+K_4+K_5+K_6.
\end{align*}
Let us give estimates for the expectation of the square  of norm of $K_i \, (i=1,\dots,6)$. For $K_1$, thanks to  \eqref{P5} and \eqref{P7}  we have
\begin{align*}
\mathbb E\|K_1\|^2 = &\int_s^t \|A^{\alpha_1} S(t-r)[G(r)-G(t)]\|_{L_2(U;H)}^2dr\\
\leq &\int_s^t \|A^{\alpha_1} S(t-r)\|^2 \|G(r)-G(t)\|_{L_2(U;H)}^2dr\\
\leq &\iota_{{\alpha_1}}^2 w_{G}(t)^2  \int_s^t (t-r)^{2(\sigma-\alpha_1)} r^{2(\beta-\sigma-1)}dr\\
\leq &\iota_{\alpha_1}^2 w_{G}(t)^2  s^{2(\beta-\sigma-1)}  \int_s^t (t-r)^{2(\sigma-\alpha_1)}dr\\
= &\iota_{\alpha_1}^2  w_{G}(t)^2  s^{2(\beta-\sigma-1)}   \frac{(t-s)^{1+2(\sigma-\alpha_1)}}{1+2(\sigma-\alpha_1)}\\
\leq &C   w_{G}(t)^2 s^{2(\beta-\sigma-1)}(t-s)^{2\sigma},
\end{align*}
where
$$w_{G}(t)=\sup_{0\leq s\leq t}\frac{s^{1-\beta+\sigma}\|G(t)-G(s)\|_{L_2(U;H)}}{(t-s)^\sigma}.$$
For $K_2$  we have
\begin{align*}
\mathbb E\|K_2\|^2 = & \int_0^s \Big\|\int_0^{t-s} AS(\rho) d\rho A^{\alpha_1}  S(s-r) [G(r)-G(s)]\Big\|_{L_2(U;H)}^2 dr\\
 \leq & \int_0^s \Big\|\int_0^{t-s} A^{1-\sigma} S(\rho) d\rho\Big\|^2 \|A^{\alpha_1+\sigma} S(s-r)\|^2\\
& \hspace{2cm} \times \|G(r)-G(s)\|_{L_2(U;H)}^2 dr\\
\leq & \iota_{1-\sigma}^2\iota_{\alpha_1+\sigma}^2 w_{G}(s)^2\\
&\times \int_0^s \Big(\int_0^{t-s} \rho^{\sigma-1}d\rho\Big)^2  (s-r)^{-2(\alpha_1+\sigma)}  (s-r)^{2\sigma} r^{2(\beta-\sigma-1)}dr\\
\leq  &C w_{G}(s)^2 \int_0^s (t-s)^{2\sigma}  (s-r)^{-2\alpha_1}   r^{2(\beta-\sigma-1)}dr\\
= & C  B(2\beta-2\sigma-1,1-2\alpha_1)s^{2(\beta-\sigma-\alpha_1)-1} w_{G}(s)^2 (t-s)^{2\sigma}\\
\leq & C w_{G}(s)^2  s^{2(\beta-\sigma-1)} (t-s)^{2\sigma}.
\end{align*}
For $K_3$, 
\begin{align*}
\mathbb E\|K_3\|^2 = & \int_0^s \|A^{\alpha_1} S(t-r) [G(s)-G(t)]\|_{L_2(U;H)}^2dr\\
\leq & \int_0^s \|A^{\alpha_1} S(t-r)\|^2 \|G(s)-G(t)\|_{L_2(U;H)}^2dr\\
 \leq & C \int_0^s (t-r)^{-2{\alpha_1}}dr  w_{G}(t)^2  (t-s)^{2\sigma} s^{2(\beta-\sigma-1)}\\
 \leq & C  [t^{1-2\alpha_1}-(t-s)^{1-2\alpha_1} ] w_{G}(t)^2  (t-s)^{2\sigma} s^{2(\beta-\sigma-1)}\\
 \leq & C w_{G}(t)^2  s^{2(\beta-\sigma-1)}   (t-s)^{2\sigma}.
\end{align*}
For $K_4$, 
\begin{align*}
\mathbb E\|K_4\|^2= & \int_s^t \|A^\alpha_1 S(t-r) G(t)\|_{L_2(U;H)}^2dr\\
\leq & \int_s^t \|A^{\alpha_1} S(t-r)\|^2 \|G(t)\|_{L_2(U;H)}^2dr\\
\leq & C \int_s^t (t-r)^{-2\alpha_1}dr \|G\|_{\mathcal F^{\beta,\sigma}((0,T];L_2(U;H)}^2 t^{2(\beta-1)} \\
\leq & C t^{2(\beta-1)}  (t-s)^{1-2\alpha_1}\\
= & C t^{2\sigma} (t-s)^{1-2\alpha_1-2\sigma} t^{2(\beta-\sigma-1)}  (t-s)^{2\sigma}\\
\leq  & C t^{2\sigma}  T^{1-2\alpha_1-2\sigma} s^{2(\beta-\sigma-1)} (t-s)^{2\sigma}\\
\leq  & C t^{2\sigma} s^{2(\beta-\sigma-1)}  (t-s)^{2\sigma}.\end{align*}
For $K_5$, 
\begin{align*}
\mathbb E\|K_5\|^2 =& \int_0^s \|A^{\alpha_1} S(s-r)[G(t)-G(s)]\|_{L_2(U;H)}^2dr\\
\leq & \int_0^s \|A^{\alpha_1} S(s-r)\|^2 \|G(t)-G(s)\|^2dr\\
 \leq &C \int_0^s (s-r)^{-2\alpha_1}dr w_{G}(t)^2 s^{2(\beta-\sigma-1)} (t-s)^{2\sigma}\\
 \leq &C s^{2(\beta-\sigma-\alpha_1)-1} w_{G}(t)^2  (t-s)^{2\sigma}\\
 \leq &C w_{G}(t)^2 s^{2(\beta-\sigma-1)}   (t-s)^{2\sigma}.
\end{align*}
Finally, for $K_6$ we have
\begin{align*}
\mathbb E\|K_6\|^2=& \int_0^s \|A^{\alpha_1} [S(t-r)-S(s-r)]G(t)\|_{L_2(U;H)}^2 dr\\
\leq & \int_0^s \|A^{\alpha_1+\sigma}S(s-r)\|^2 \|S(t-s)-I]A^{-\sigma}\|^2 \|G(t)\|_{L_2(U;H)}^2 dr\\
= & \int_0^s \|A^{\alpha_1+\sigma}S(s-r)\|^2 \Big\|\int_0^{t-s} A^{1-\sigma} S(\rho)d\rho\Big\|^2  dr\|G(t)\|_{L_2(U;H)}^2\\
\leq & \iota_{\alpha_1+\sigma}^2 \iota_{1-\sigma}^2   \int_0^s (s-r)^{-2(\alpha_1+\sigma)} dr \Big[\int_0^{t-s} \rho^{\sigma-1}d\rho\Big]^2 \\
&\times \|G\|_{\mathcal F^{\beta,\sigma}((0,T]; L_2(U;H))}^2 t^{2(\beta-1)} \\
\leq &C s^{1-2(\alpha_1+\sigma)}t^{2\sigma} t^{2(\beta-\sigma-1)} (t-s)^{2\sigma}\\
\leq &C T^{1-2(\alpha_1+\sigma)} t^{2\sigma} s^{2(\beta-\sigma-1)} (t-s)^{2\sigma}\\
\leq & Ct^{2\sigma} s^{2(\beta-\sigma-1)} (t-s)^{2\sigma}.
\end{align*}
In this way, we conclude that 
\begin{align*}
\mathbb E\|A^{\alpha_1} W_G(t)-A^{\alpha_1} W_G(s)\|^2&\leq  6\sum_{i=1}^6 \mathbb E\|K_i\|^2\\
 &\leq  m(t)^2 s^{2(\beta-\sigma-1)} (t-s)^{2\sigma},  
\end{align*}
where $m(\cdot)$ is some increasing function   on $(0,T]$ such that $\lim_{t\to 0} m(t)=0$.

{\bf Step 4}. Let us verify that  for  $ 0<\gamma<\sigma,$ $0<\alpha_1\leq \frac{1}{2}-\sigma$ and $\epsilon\in (0,T]$
$$A^{\alpha_1} X\in  \mathcal C^\gamma([\epsilon,T];H) \hspace{1cm}\text{ a.s.},$$
and 
$$\mathbb E \|A^{\alpha_1} X\|\in  \mathcal F^{\beta,\sigma}((0,T];\mathbb R). $$
On  account of  {\bf Step 2}, $A^{\alpha_1}  W_G$ is a Gaussian process. Using the estimate in {\bf Step 3} we  apply the Kolmogorov continuity theorem to the process  $A^{\alpha_1}  W_G.$ We then obtain 
$$A^{\alpha_1}  W_G\in \mathcal C^\gamma([\epsilon,T];H).$$ 
Furthermore, it has already verified in Theorem \ref{theorem1} that 
$$J_1=A^{\alpha_1} S(t) \xi\in  \mathcal F^{\beta,\gamma}((0,T]; H)$$
 (this result holds true for every $\alpha_1\in (0,1)).$ 
With a remark that
$$ \mathcal F^{\beta,\gamma}((0,T];H) \subset \mathcal C^\gamma([\epsilon,T];H), \hspace{1cm} 0<\epsilon\leq T,$$
we arrive at the first statement, i.e.
$$A^{\alpha_1} X=J_1+A^{\alpha_1}  W_G\in \mathcal C^\gamma([\epsilon,T];H).$$

We shall show the second one. Due to the estimate in {\bf Step 3}, 
\begin{align*}
 [\mathbb E\|A^{\alpha_1} W_G(t)-A^{\alpha_1} W_G(s)\|]^2 &\leq \mathbb E\|A^{\alpha_1} W_G(t)-A^{\alpha_1} W_G(s)\|^2\\
 &\leq  m(t)^2 s^{2(\beta-\sigma-1)} (t-s)^{2\sigma}.
\end{align*}
Hence,
\begin{equation} \label{P26}
\frac{ s^{1-\beta+\sigma} \mathbb E\|A^{\alpha_1} W_G(t)-A^{\alpha_1} W_G(s)\| }{ (t-s)^{\sigma}} \leq m(t).
\end{equation} 
Furthermore, by choosing $\gamma=\sigma$ in  the estimates \eqref{P16}, \eqref{P17} and \eqref{P18},  which hold true for every ${\alpha_1}\in (0,1)$, we obtain 
\begin{align*}
&\frac{s^{1-\beta+\sigma} \|A^{\alpha_1}[S(t)-S(s)] \xi\|}{(t-s)^\sigma} \leq  C \|A^\beta \xi\| \max\{(t-s)^{1-\alpha_1},(t-s)^{1-\sigma}\}.  
\end{align*}
Taking expectation of both the hand sides of the above estimate, it  follows that
\begin{align}
&\frac{s^{1-\beta+\sigma}\mathbb E \|J_1(t)-J_1(s)\|}{(t-s)^\sigma} \leq  C \mathbb E \|A^\beta \xi\| \max\{t^{1-\alpha_1},t^{1-\sigma}\}.  \label{P27}
\end{align}
Combining \eqref{P26} and \eqref{P27} yields that
\begin{align*} 
&\frac{ s^{1-\beta+\sigma} |\mathbb E\|A^{\alpha_1} X(t)\|-\mathbb E\|A^{\alpha_1} X(s)\| |}{ (t-s)^{\sigma}} \notag\\
&\leq \frac{ s^{1-\beta+\sigma} \mathbb E\|A^{\alpha_1} X(t)-A^{\alpha_1} X(s)\| }{ (t-s)^{\sigma}} \notag\\
&= \frac{ s^{1-\beta+\sigma} \mathbb E\|[A^{\alpha_1} W_G(t)-A^{\alpha_1} W_G(s)] +[J_1(t)-J_1(s)]\| }{ (t-s)^{\sigma}} \notag\\
& \leq \frac{ s^{1-\beta+\sigma} [\mathbb E\|A^{\alpha_1} W_G(t)-A^{\alpha_1} W_G(s)\|  + \mathbb E \|J_1(t)-J_1(s)\|]}{ (t-s)^{\sigma}}\notag\\
& \leq m(t)+  C \mathbb E\|A^\beta \xi\| \max\{t^{1-\alpha_1},t^{1-\sigma}\}.  
\end{align*}
Therefore,
\begin{equation}  \label{P28}
\sup_{0\leq s<t\leq T} \frac{ s^{1-\beta+\sigma} |\mathbb E\|A^{\alpha_1} X(t)\|-\mathbb E\|A^{\alpha_1} X(s)\| |}{ (t-s)^{\sigma}}<\infty
\end{equation}
and 
\begin{equation}  \label{P29}
\lim_{t\to 0} \sup_{0\leq s<t} \frac{ s^{1-\beta+\sigma} |\mathbb E\|A^{\alpha_1} X(t)\|-\mathbb E\|A^{\alpha_1} X(s)\| |}{ (t-s)^{\sigma}}=0.
\end{equation}

On the other hand, by using the equality 
$$\mathbb E\|A^{\alpha_1} W_G(t)\|^2=\int_0^t \|A^{\alpha_1} S(t-s) G(s)\|_{L_2(U;H)}^2 ds$$
and  the estimate in {\bf Step 2}, we have  
\begin{align*}
t^{1-\beta} \mathbb E\|A^{\alpha_1} W_G(t)\| & \leq t^{1-\beta} \sqrt{\mathbb E\|A^{\alpha_1} W_G(t)\|^2}\\
&\leq \iota_{\alpha_1}   \|G\|_{\mathcal F^{\beta,\sigma}((0,T];L_2(U;H))} t^{\frac{1}{2}-\alpha_1}\sqrt{ B(2\beta-1,1-2\alpha_1)}. 
\end{align*}
Hence,
$$\lim_{t\to 0} t^{1-\beta}  \mathbb E\|A^{\alpha_1} W_G(t)\|=0.$$
In addition, thanks to  \eqref{P10},  
\begin{align*}
\lim_{t\to 0} t^{1-\beta} \mathbb E\|J(t)\|=\lim_{t\to 0}  \mathbb E\|A^{\alpha_1-1} t^{1-\beta} A^{1-\beta}S(t) A^\beta \xi\|=0.
\end{align*}
In this way, we thus proved that
\begin{equation}  \label{P30}
\lim_{t\to 0} t^{1-\beta} \mathbb E\|A^{\alpha_1} X(t)\|  =\lim_{t\to 0} t^{1-\beta} \mathbb E\|J(t)+A^{\alpha_1} W_G(t)\|=0.
\end{equation}
By \eqref{P28}, \eqref{P29} and \eqref{P30},
the second statement has been verified. We complete the proof of the theorem.
\end{proof}
\begin{proof}[Proof of Theorem \ref{theorem3}]
Clearly, the assumptions of  Theorem \ref{theorem1} and Theorem  \ref{theorem2} are fulfilled. Therefore, 
 it is easy to see that Theorem \ref{theorem3} follows from these two theorems.
\end{proof}
 \section{An application to neurophysiology} \label{section3}
  In this section, we will apply our results to a model arising in neurophysiology, which has been proposed by Walsh \cite{Walsh}.  Nerve cells operate by a mixture of chemical, biological and electrical properties. We shall regard them as long thin cylinders, which act much like electrical cables. Denote by $V(t,x)$ the electrical potential at time $t$ and the point $x$. If we identify such a cylinder with the interval $[0,L]$ then $V$ satisfies a nonlinear equations coupled with a system of ordinary differential equations, called the Hodgkin-Huxley equations.  In some certain ranges of the values of the potential, the equations are approximated by the cable equation:
 \begin{equation} \label{P31}
 \begin{cases}
\frac{\partial V}{\partial t}=\frac{\partial^2 V}{\partial x^2} -V, \\
 V\colon [0,\infty)\times [0,L] \to \mathbb R,  \\
 t\geq 0, x\in [0,L].
 \end{cases}
 \end{equation}
 Let us consider \eqref{P31} with an additional external force, called impulses of current. Neurons receive these impulses of current via synapses on their surface. Denote by $F(t)$ the current arriving at time $t$, then  the electrical potential satisfies the equation: 
  \begin{equation*} 
\begin{cases}
\frac{\partial V}{\partial t}=\frac{\partial^2 V}{\partial x^2} -V +F(t), \\
 V\colon [0,\infty)\times [0,L] \to \mathbb R,  \\
 t\geq 0, x\in [0,L].
 \end{cases}
 \end{equation*}
Suppose now that the current $F$ is perturbed by a space-time white noise $\dot W$, i.e.
$$F(t)\leadsto F(t) + G(t) \dot W.$$
 This leads us to the stochastic partial differential equation:
  \begin{equation} \label{P32}
  \begin{cases}
\begin{aligned}
&\frac{\partial V}{\partial t}(t,x)=\frac{\partial^2 V}{\partial x^2}(t,x) -V(t,x) +F(t) + G(t) \frac{\partial W(t)}{\partial t},&\hspace{0.5cm}  t> 0, x\in [0,L],\\
& \frac{\partial V}{\partial x}(0,t)= \frac{\partial V}{\partial x}(L,t)=0, &\hspace{0.5cm} t>0,\\
&V(0,x)=V_0(x), &\hspace{0.5cm}x\in (0,L),
\end{aligned}
 \end{cases}
 \end{equation}
where $W$ is a  cylindrical   Wiener process defined on a filtered probability space $(\Omega, \mathcal F,\mathcal F_t,\mathbb P),$ $V_0$ is an $\mathcal  F_0$-measurable  random variable. More explanations for the construction of the system \eqref{P32} can be found in  \cite{Walsh}.
 
Let us next construct an abstract formulation for \eqref{P32}. We will handle the equation in the Hilbert space $H=L_2([0,L])$. We assume that   the cylindrical   Wiener process $W$ takes value on a separable Hilbert space $U$, and that $F$ and $G$ are measurable from $([0,\infty),\mathcal B([0,\infty)))$ to $ (H,\mathcal B(H))$ and $(L_2(U;H),\mathcal B(L_2(U;H)), $ respectively. Denote by $A$ the realization of the operator $-\frac{\partial^2}{\partial x^2}+I$
 in $H$ under the Neumann boundary conditions on the boundary $\{0,L\}$. We formulate the equation \eqref{P32} as the Cauchy problem for an abstract stochastic linear equation
\begin{equation} \label{P33}
\begin{cases}
dV+AV=F(t)+G(t)dW(t), \hspace{1cm} 0<t<\infty,\\
V(0)=V_0
\end{cases}
\end{equation}
in $H$. 
\begin{lemma}\label{lemma2}
The operator $A$ is a positive definite self-adjoint operator on $H$ with  domain 
$$\mathcal D(A)=H_N^2([0,L]):=\{u\in H^2([0,L]) \text{ such that } \frac{\partial u}{\partial x}(0)= \frac{\partial u}{\partial x}(L)=0\}.$$ 
 In addition, the domains of fractional powers $A^\theta, 0\leq \theta\leq 1,$ are characterized by 
\begin{equation*}
\mathcal D(A^\theta)=
\begin{cases}
H^{2\theta}([0,L]) \hspace{1cm} \text{ for } 0\leq \theta<\frac{3}{4},\\
H_N^{2\theta}([0,L])  \hspace{1cm} \text{ for } \frac{3}{4}< \theta\leq 1.
\end{cases}
\end{equation*}
\end{lemma}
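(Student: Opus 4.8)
The plan is to construct $A$ from a symmetric coercive sesquilinear form, read off self-adjointness and positive definiteness from the representation theorem, pin down $\mathcal D(A)$ by elliptic regularity together with the natural (Neumann) boundary term, and finally obtain the fractional-power domains by complex interpolation. First I would introduce the form
\[
a(u,v)=\int_0^L u'(x)\,\overline{v'(x)}\,dx+\int_0^L u(x)\,\overline{v(x)}\,dx,\qquad u,v\in H^1([0,L]).
\]
This form is bounded on $H^1([0,L])$, symmetric, and coercive, since $a(u,u)=\|u'\|_{L_2}^2+\|u\|_{L_2}^2=\|u\|_{H^1}^2\geq\|u\|_{L_2}^2$. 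The representation theorem for closed symmetric coercive forms then yields a unique self-adjoint operator $A$ with $\langle Au,v\rangle=a(u,v)$, and coercivity gives $\langle Au,u\rangle\geq\|u\|_{L_2}^2$, i.e. $A$ is positive definite. In particular its spectrum lies in $[1,\infty)$, so {\rm (H1)} and {\rm (H2)} hold for every $\varpi\in(0,\tfrac{\pi}{2})$ and the semigroup theory of the previous sections applies.

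Next I would identify $\mathcal D(A)$. For $u\in\mathcal D(A)$ set $Au=f$. Testing $a(u,v)=\langle f,v\rangle$ against $v\in C_c^\infty(0,L)$ gives $-u''+u=f$ in the distributional sense, whence $u''\in L_2$ and $u\in H^2([0,L])$. Integrating by parts,
\[
a(u,v)=\int_0^L(-u''+u)\,\overline v\,dx+\big[u'\,\overline v\big]_0^L,
\]
and comparing with $a(u,v)=\langle f,v\rangle$ for \emph{all} $v\in H^1([0,L])$ forces the boundary term to vanish, i.e. $u'(0)=u'(L)=0$. The reverse inclusion is immediate, so $\mathcal D(A)=H_N^2([0,L])$.

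The fractional-power step is where the real work lies. Because $A$ is positive definite and self-adjoint, its imaginary powers are unitary (spectral calculus), so $A$ has bounded imaginary powers and one obtains the identification $\mathcal D(A^\theta)=[H,\mathcal D(A)]_\theta=[L_2([0,L]),H_N^2([0,L])]_\theta$ with the complex interpolation functor (see \cite{yagi}). I would then invoke the standard Sobolev interpolation identity $[L_2,H^2]_\theta=H^{2\theta}$ together with the behaviour of the Neumann subspace under interpolation. The main obstacle, and the reason the value $\theta=\tfrac34$ surfaces, is the trace of the normal derivative: in one dimension the map $u\mapsto(u'(0),u'(L))$ is a bounded trace operator on $H^s([0,L])$ precisely when $s>\tfrac32$, and it has no continuous extension for $s<\tfrac32$. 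Consequently the constraint $u'(0)=u'(L)=0$ carves out a closed subspace of $H^{2\theta}$ only when $2\theta>\tfrac32$. For $2\theta<\tfrac32$ the normal-derivative trace is undefined, the condition disappears under interpolation, and $[L_2,H_N^2]_\theta=H^{2\theta}$; for $2\theta>\tfrac32$ the condition survives and $[L_2,H_N^2]_\theta=H_N^{2\theta}$. This produces exactly the two regimes $0\le\theta<\tfrac34$ and $\tfrac34<\theta\le1$, the borderline $\theta=\tfrac34$ being excluded because the trace fails to be bounded there. I expect the delicate point to be the rigorous verification that interpolation preserves (respectively kills) the Neumann constraint across this threshold; this is standard but technical, and I would rely on the trace theorem and the interpolation identities catalogued in \cite{yagi} rather than reproving them from scratch.
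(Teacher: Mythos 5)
Your proposal is correct in outline, but note that the paper does not actually prove this lemma: immediately after its statement, the author simply remarks that it ``is a special case of Theorem 16.7 and Theorem 16.9 in \cite{yagi}'' and moves on, so the entire content is delegated to that reference. What you have written is, in condensed form, essentially the argument underlying those cited theorems: the variational construction of $A$ from the symmetric coercive form $a(u,v)$ on $H^1([0,L])$ (giving self-adjointness and $A\ge 1$), elliptic regularity plus the vanishing of the boundary term $[u'\,\overline v]_0^L$ to identify $\mathcal D(A)=H_N^2([0,L])$, the identity $\mathcal D(A^\theta)=[H,\mathcal D(A)]_\theta$ (legitimate here since a positive definite self-adjoint operator has unitary, hence bounded, imaginary powers), and the Grisvard-type interpolation of Sobolev spaces with boundary conditions, with the threshold $2\theta=\tfrac32$ dictated by boundedness of the normal-derivative trace. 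The comparison, then, is one of economy versus self-containedness: the paper's citation buys rigor by reference for precisely the step you flag as ``standard but technical'' --- the survival or disappearance of the Neumann constraint under interpolation --- which your sketch also defers to the literature rather than proving. In a fully written-out version you would either cite those theorems as the paper does, or supply the interpolation argument; alternatively, in this one-dimensional setting there is a more elementary route you might prefer: expand in the Neumann eigenbasis $\{\cos(k\pi x/L)\}_{k\ge 0}$ of $A$, so that $\mathcal D(A^\theta)$ is characterized by summability of $(1+(k\pi/L)^2)^{2\theta}|a_k|^2$, and then identify these cosine-coefficient spaces with $H^{2\theta}([0,L])$ and $H_N^{2\theta}([0,L])$ in the two regimes, which makes the exclusion of $\theta=\tfrac34$ visible by hand.
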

The lemma \ref{lemma2} is a special case of Theorem 16.7 and Theorem 16.9 in  \cite{yagi}. It then follows that $A$ is a sectorial operator.
 The following theorem follows from Theorem \ref{theorem1}, Theorem \ref{theorem2} and Theorem \ref{theorem3}. It shows global existence and regularity of solutions to \eqref{P33}.
\begin{theorem} \label{theorem5}
Assume that $V_0\in \mathcal D(A^\beta) $ a.s. and $\mathbb E\|A^\beta V_0\|<\infty.$ 
\begin{itemize}
\item [\rm (i)] If  $G\equiv 0$ on  $[0,\infty)$ (i.e. \eqref{P33} is a deterministic equation) and $F$ satisfies {\rm (H3)} for every $T>0,$ then the equation \eqref{P33} has a unique continuous mild solution $V$ on $[0,\infty)$ possessing the regularity:
$$V\in \mathcal C((0,\infty);\mathcal D(A^{1-\alpha})), $$
$$A^{\alpha} V\in \mathcal C([0,\infty);H)\cap \mathcal F^{\beta-\sigma+\gamma,\gamma}((0,T];H), $$
  and  satisfying the estimate
\begin{equation*} 
\|V(t)\|\leq \iota_\alpha B(\beta,1-\alpha) \|A^{-\alpha}F\|_{\mathcal F^{\beta,\sigma}} t^{\beta-\alpha} +\iota_0 \|\xi\|
\end{equation*}
for every $T>0$, $t\in [0,T]$ and $\gamma\in [0, \sigma]$.
\item [\rm (ii)] If  $F\equiv 0$ on  $[0,\infty)$  and  $G$ satisfies {\rm (H4)} for every $T>0,$ then \eqref{P33} has a unique mild solution $V$ on $[0,\infty)$ possessing the regularity:
$$V\in \mathcal C((0,\infty);\mathcal D(A^\nu)), \quad A^{\alpha_1} V\in  \mathcal C^\gamma([\epsilon,T];H) \hspace{1cm}\text{ a.s.}$$
and 
$$\mathbb E \|A^{\alpha_1} V\|\in  \mathcal F^{\beta,\sigma}((0,T];\mathbb R), $$
   and satisfying the estimate
  \begin{equation*} 
  \mathbb E\|V(t)\|\leq C[\mathbb E\|A^\beta\xi\|+\|G\|_{\mathcal F^{\beta, \sigma} ((0,T];L_2(U;H))} t^{\beta-\frac{1}{2}}], \hspace{1cm} t\in [0,T]
  \end{equation*}
  for every $T>0, \nu\in (0,\frac{1}{2}), \alpha_1\in (0, \frac{1}{2}-\sigma],  \gamma\in (0,\sigma)$ and $\epsilon\in (0,T]$, where $C$ is some constant depending only on the exponents and  constants $\iota_\theta, \upsilon_\theta \,(\theta\geq 0)$.
\item [\rm (iii)] If  $F$ and $G$ satisfy {\rm (H3)} and {\rm (H4)}  for every $T>0$ and $\alpha\in (0, \frac{1}{2}-\sigma],$  
then there exists a unique  mild solution of \eqref{P12} possessing the regularity:
$$V\in \mathcal C((0,\infty);\mathcal D(A^\nu)), \quad A^{\alpha} V\in  \mathcal C^\gamma([\epsilon,T];H) \hspace{1cm}\text{ a.s.}$$
and 
$$\mathbb E \|A^{\alpha} V\|\in  \mathcal F^{\beta,\sigma}((0,T];\mathbb R), $$
   and satisfying the estimate
  \begin{equation*} 
  \mathbb E\|V(t)\|\leq C[\mathbb E\|A^\beta\xi\|+\|A^{-\alpha} F\|_{\mathcal F^{\beta,\sigma}}\|G\|_{\mathcal F^{\beta, \sigma} ((0,T];L_2(U;H))} t^{\beta-\frac{1}{2}}] \hspace{0cm} 
  \end{equation*}
  for every $t\in [0,T],$  $T>0, $ $ \nu\in (0,\frac{1}{2}),   \gamma\in (0,\sigma)$ and $\epsilon\in (0,T]$, where $C$ is some constant depending only on the exponents and  constants $\iota_\theta, \upsilon_\theta \,(\theta\geq 0)$.
\end{itemize}
\end{theorem}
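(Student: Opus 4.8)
The plan is to obtain the three parts as direct corollaries of Theorems~\ref{theorem1}, \ref{theorem2} and \ref{theorem3}, once the concrete operator $A$ is shown to satisfy the standing hypotheses (H1) and (H2), and then to pass from the fixed-horizon conclusions to global-in-time ones. For the first point I would invoke Lemma~\ref{lemma2}: it identifies $A$ as a positive definite self-adjoint operator on $H=L_2([0,L])$, which is automatically densely defined and closed, has real spectrum bounded away from $0$, and whose resolvent obeys $\|(\lambda-A)^{-1}\|\le M_\varpi/|\lambda|$ outside a sector $\Sigma_\varpi$ with $0<\varpi<\frac{\pi}{2}$. This is precisely the sectoriality recorded just after Lemma~\ref{lemma2}, so (H1) and (H2) hold and $(-A)$ generates an analytic semigroup enjoying the estimates \eqref{P7}--\eqref{P10}. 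The hypotheses (H3), respectively (H4), are assumed on each interval $[0,T]$, and the datum $V_0$ fulfils $V_0\in\mathcal D(A^\beta)$ a.s.\ with $\mathbb E\|A^\beta V_0\|<\infty$, matching the requirements on $\xi$ in the abstract theorems.

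With the framework in place, for each fixed $T>0$ I would apply the matching abstract result on $[0,T]$: Theorem~\ref{theorem1} yields part~(i), Theorem~\ref{theorem2} part~(ii), and Theorem~\ref{theorem3} part~(iii), together with the displayed a priori estimates. To extend to $[0,\infty)$ I would use that the mild solution is given by the explicit variation-of-constants formula of Definitions~1 and~\ref{Def1}; hence for $[0,T_1]\subset[0,T_2]$ the two solutions agree on $[0,T_1]$ by uniqueness, and they patch together into a single process on $[0,\infty)$. The global regularity assertions --- for instance $V\in\mathcal C((0,\infty);\mathcal D(A^{1-\alpha}))$ and $A^\alpha V\in\mathcal C([0,\infty);H)$ in part~(i), or $A^{\alpha_1}V\in\mathcal C^\gamma([\epsilon,T];H)$ a.s.\ in part~(ii) --- are local in time and therefore transfer verbatim from the conclusions of the abstract theorems on each $[0,T]$.

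The only point demanding attention is this globalization. The weighted H\"older norms $\|A^{-\alpha}F\|_{\mathcal F^{\beta,\sigma}}$ and $\|G\|_{\mathcal F^{\beta,\sigma}((0,T];L_2(U;H))}$ are computed over $(0,T]$ and so depend on $T$; consequently the quantitative estimates are genuinely $T$-dependent and are to be read for each fixed horizon, whereas the qualitative regularity, being local, survives on all of $(0,\infty)$. I expect no essential difficulty beyond this bookkeeping: once Lemma~\ref{lemma2} supplies sectoriality, the theorem is a corollary of the abstract results. The Sobolev characterization of $\mathcal D(A^\theta)$ in Lemma~\ref{lemma2} may additionally be used, if desired, to translate the abstract conclusions into concrete Sobolev regularity of the trajectory $V(t,\cdot)$, which is the form of interest for the neurophysiological model.
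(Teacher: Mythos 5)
Your proposal is correct and follows essentially the same route as the paper: Lemma \ref{lemma2} supplies sectoriality of $A$, so (H1) and (H2) hold, and each part of Theorem \ref{theorem5} is then a direct corollary of Theorems \ref{theorem1}, \ref{theorem2} and \ref{theorem3} applied on each finite horizon $[0,T]$. Your additional remark on patching solutions across horizons by uniqueness is sound bookkeeping that the paper leaves implicit.
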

\begin{remark}
A result in \cite{Walsh} shows that $V$ is a H\"{o}lder continuous function with exponent $\frac{1}{4}-\epsilon$, for any $\epsilon>0$. We have improved this result by showing that $A^\alpha V$ is H\"{o}lder continuous  with an arbitrary exponent smaller than $\sigma$ (noting that $\sigma$ in Theorem \ref{theorem5} is possibly larger than $\frac{1}{4}$). Using the same framework, we can treat the equation \eqref{P32} in higher dimensions and obtain similar results.
\end{remark}
\section*{Acknowledgements}
A part of this paper was done  while the author was visiting the Faculty of Mathematics, Bielefeld University in January 2015. The author would like to thank Prof. Michael R\"{o}ckner for his support during the stay at the university. Thanks also goes to the referee for the valuable comments.

\end{document}